\newcommand{\vep}{\varepsilon}
\begin{document}

\title{Diffusion of the Random Lorentz Process in a Magnetic Field}

\author{
{\sc Christopher Lutsko$^1$ and B\'alint T\'oth$^{2,3}$}
\\[8pt]
$^1$ University of Houston, USA
\\
$^2$ University of Bristol, UK
\\
$^3$R\'enyi Institute, Budapest, HU
}

\maketitle

\begin{abstract}
  \noindent Consider the motion of a charged, point particle moving in the complement of a Poisson distribution of hard sphere scatterers in two dimensions under the effect of a fixed magnetic field. Building on, and extending a coupling method established by the authors, \cite{lutsko-toth_19}, we show that this 'magnetic Lorentz gas' satisfies an invariance principle in an intermediate scaling limit. That is, we apply the low-density (Boltzmann-Grad) limit and simultaneously take the limit as time goes to infinity, then prove convergence of the rescaled trajectory to a Brownian motion in this limit.

\medskip\noindent
{\sc MSC2010: } 60F17; 60K35; 60K37; 60K40; 82C22; 82C31; 82C40; 82C41

\medskip\noindent
{\sc Key words and phrases:} Lorentz gas; low-density limit; invariance principle.

\end{abstract}

%%%%%%%%%%%%%%%%%%%%%%%%%%%%%%%%%%%%%%%%%%%%%%%%%%%%%%%%%%%%%%%%%%%%%%%%%%%%%%%%
%%%%%%%%%%%%%%%%%%%%%%%%%%%%%%%%%%%%%%%%%%%%%%%%%%%%%%%%%%%%%%%%%%%%%%%%%%%%%%%%

\section{Introduction}
\label{s:Introduction}

We study the long time behavior of the random, magnetic Lorentz gas in two dimensions. That is, let $\cP$ be a Poisson point process in $\R^2$ of intensity $\varrho>0$. Now place a spherical scatterer of radius $\varepsilon>0$ and infinite mass, centered at each point in $\cP$ and fix  a constant magnetic field of intensity $B$ perpendicular to the plane. With that, we consider the motion of a charged, point particle moving with unit speed in the complement of the scatterer set and colliding elastically with the fixed scatterers. We call this the \emph{magnetic Lorentz process} (abbreviated MLP). We illustrate some sample trajectories for this model in Figure \ref{fig:trajectory}.

This model presents a simple, yet highly non-trivial example of particle dynamics and has been very well studied. For instance, in a recent paper, \cite{nota-saffirio-simonella_19} it is proved, using methods inspired by \cite{gallavotti_69}, that in the low density, or Boltzmann-Grad limit, and annealed setting, the MLP converges in distribution, in any compact microscopic time interval, to the limiting process corresponding to the linear magnetic Boltzmann equation derived and analysed in \cite{bobylev_95, bobylev_97, bobylev_01, kuzmany-spohn_98}.

The limiting velocity process (described in detail in section \ref{ss: The limit  process}) is essentially Markovian and sufficiently fast mixing. Thus, it easily follows that if we \emph{subsequently apply a diffusive limit} we obtain the invariance principle for this limit process. However, we stress that this drops out by \emph{first taking a kinetic limit}, and \emph{then applying a diffusive limit}.

In the present paper, we present an alternative, purely probabilistic  approach to studying this model. This approach is in the spirit of our earlier papers \cite{lutsko-toth_19, lutsko-toth_20} where the scaling limit of the Lorentz gas in $d\ge3$ and, respectively, of the Ehrenfest wind-tree model in $d\ge2$ were studied. Not only does our approach recover the result of \cite{nota-saffirio-simonella_19} for finite microscopic times, it allows us to consider the \emph{joint kinetic and diffusive limit} for time-scales going to infinity. That is, we consider the displacement of the MLP in the limit as we apply Boltzmann-Grad scaling, and simultaneously diffusive time scaling. For an explicit formulation see Theorem \ref{thm:main-theorem}.

The rest of the paper is structured as follows. In section \ref{s: The processes} we formally define the processes involved: the physical MLP, the low-density (Boltzmann-Grad) limit process, and an auxiliary 'Markovized' version of the MLP. We also state our main result and outline our proof in this section. The proof is given in section \ref{s: Proof}, partitioned into three subsections. Section \ref{s: Appendix: Random walk estimates} is an Appendix in which we collected ingredients pertaining to our random walk and Green's functions.

%%%%%%%%%%%%%%%%%%%%%%%%%%%%%%%%%%%%%%%%%%%%%%%%%%%%%%%%%%%%%%%%%%%%%%%%%%%%%%%%
%------------------------FIGURE: EXAMPLE_PATH-----------------------------------
%%%%%%%%%%%%%%%%%%%%%%%%%%%%%%%%%%%%%%%%%%%%%%%%%%%%%%%%%%%%%%%%%%%%%%%%%%%%%%%%

\begin{figure}[t!]
  \begin{center}    
    \includegraphics[width=0.4\textwidth]{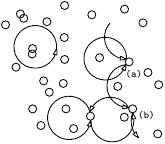}
  \end{center}
  \caption{%
    {\tt Here we give two examples of the magnetic Lorentz gas travelling through the same array of scatterers. Collision (a) is an example of a recollision with the 'current scatterer', while collision (b) is an example of a recollision with a 'past scatterer'.}   
  }%
  
  \label{fig:trajectory}

\end{figure}

\section{The processes}
\label{s: The processes}

\subsection{The magnetic Lorentz process}
\label{ss:The magnetic Lorentz process}

First we formally define the physical MLP, and then state the main result of this paper. Given a Poisson point process $\cP$ of intensity $\varrho>0$, magnetic field of intensity $B>0$ perpendicular to the plane, radius $\varepsilon>0$, and initial velocity $U_0 \in \mathbb{S}^1$ on the unit circle, we denote the position at time $t>0$ of a magnetic Lorentz process starting at the origin: $X^{\varepsilon,\varrho}(t) \in\R^2$. We will always assume that the starting point is in the complement of the area covered by scatterers. The process $X^{\varepsilon,\varrho}$ moves in anti-clockwise circular orbits of radius: $R_L:= \frac{1}{B}$, the 'Larmor radius' (assuming the mass and charge of the particle are unit). For the sake of simplicity we assume $B=1$. 

The following scenarios of trajectory could occur with positive probability. For a detailed analysis of the various trapping scenarios see \cite{kuzmany-spohn_98}.
\begin{enumerate}[(A)]
\item
\label{perctrap}
The starting point is fully surrounded by a connected cluster of partially overlapping scatterers and thus the trajectory is trapped in a compact domain. Due to a simple percolation argument this event will happen with probability $\theta_1(\varrho\vareps^2)$, where the percolation function $\theta_1:\R_+\to\R_+$ is nondecreasing, and $\lim_{u\to0}\theta_1(u)=0$. 
\item 
\label{larmororbit}
The trajectory of the charged particle goes around on a full Larmor orbit without encountering a scatterer, and thus it is again trapped in a compact domain. This event happens with probability $e^{-4\pi \varrho\varepsilon}$ (if $\varepsilon\leq1$).
\item 
\label{magnettrap}
The trajectory of the charged particle encounters a finite positive number of scatterers, and densely fills the union of discs of radii $2+\vareps$ centered at these finitely many scatterers, which in this case must be a connected compact set of the plain. The probability of this event is bounded from above by $\theta_2(\varrho)$ where $\theta_2:\R_+\to\R_+$ is nonincreasing, and $\lim_{u\to \infty}\theta_2(u)=0$. 
\item 
\label{notrap}
None of the previous trapping scenarios occur and thus, the trajectory evolves unboundedly. 
\end{enumerate}
We will denote by $A$, $B$, $C$, and $D$ the events described above verbally.  Examples of the various scenarios are shown in Fig.  \ref{fig:scenarios}.

%%%%%%%%%%%%%%%%%%%%%%%%%%%%%%%%%%%%%%%%%%%%%%%%%%%%%%%%%%%%%%%%%%%%%%%%%%%%%%%%
%------------------------FIGURE: EXAMPLE_PATH-----------------------------------
%%%%%%%%%%%%%%%%%%%%%%%%%%%%%%%%%%%%%%%%%%%%%%%%%%%%%%%%%%%%%%%%%%%%%%%%%%%%%%%%

\begin{figure}[t!]
  \begin{center}    
    \includegraphics[width=0.9\textwidth]{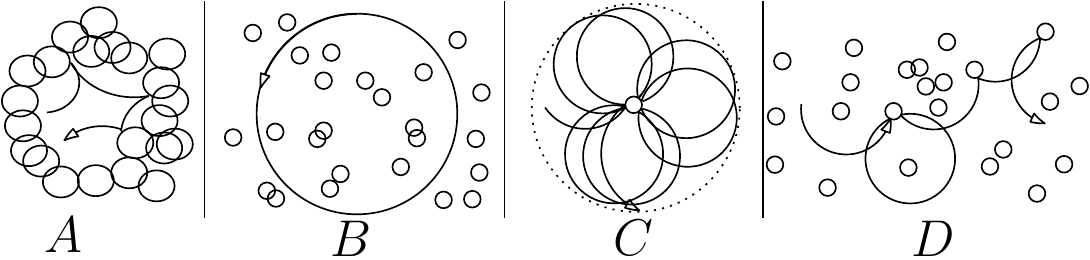}
  \end{center}
  \caption{%
    {\tt The four possible scenarios for the magnetic Lorentz gas are shown here as described above.}   
  }%
  
  \label{fig:scenarios}

\end{figure}

A major open problem in this context is to understand the long time diffusive behaviour of the trajectory of the charged particle in the above setting.  Assume that the parameters $\varepsilon$ and $\varrho$ are chosen so that the probability of scenario \ref{notrap} is positive. The Holy Grail would be to prove the Invariance Principle (a.k.a. functional Central Limit Theorem) for the diffusively scaled trajectory
\begin{align}
\label{Diff}
\frac{X^{\varepsilon,\varrho}(tT)}{\sqrt{T}}  
\qquad 
\mbox{ as } 
\qquad
T \to \infty, 
\end{align}
conditionally on the event $D$. With our present knowledge and technical methods, this goal is, unfortunately, beyond reach. 

The setting of the cited works and of the present note is the \emph{Boltzmann-Grad (low density)} limit: 
\begin{align}
\label{BG-Limit}
\varrho\to\infty, 
\qquad
\varepsilon \to 0, 
\qquad 
\varrho \varepsilon \to 2.  
\end{align}
In this limit, the mean free flight time will have length $1$, and the probabilities of the above listed scenarios converge as follows 
\begin{align}
\label{scenarioprobabs}
\probab{A}\to0, 
\qquad
\probab{B}\to e^{-8\pi}, 
\qquad
\probab{C}\to0, 
\qquad
\probab{D}\to 1-  e^{-8\pi}. 
\end{align}
Thus, in this limit there are two scenarios left: the trajectory either moves indefinitely on a circular Larmor trajectory, or, if it experiences one collision with a scatterer then it will collide with infinitely many distinct scatterers.

Our main result is 

\begin{theorem} 
\label{thm:main-theorem}
Let $T(\varepsilon)$ be such that 
$\lim_{\varepsilon\to 0} T(\varepsilon) =\infty $, 
$\lim_{\varepsilon\to 0} \varepsilon\abs{\log{\varepsilon}} T(\varepsilon)=0$. Then, 
under the Boltzmann-Grad limit \eqref{BG-Limit},
\begin{align}
\label{inv princ}
\big\{
t\mapsto 
T(\varepsilon)^{-1/2}
X^{\varrho, \varepsilon}(T(\varepsilon)t)
\big\}
\Rightarrow
\big\{
t\mapsto
\alpha W(t)
\big\}
\end{align} 
where $\Rightarrow$ denotes weak convergence of trajectories in $C([0,1]\to\R^2)$, and, on the right hand side, $\alpha$ is a Bernoulli random variable with distribution $\probab{\alpha=0}= e^{-8\pi}=1-\probab{\alpha=\sigma}$, $\sigma>0$ is a finite positive constant, and $t\mapsto W(t)$ is a standard Wiener process on $\R^2$, independent of the random variable $\alpha$. 
\end{theorem}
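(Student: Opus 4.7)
\medskip

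\noindent\textbf{Proof proposal.} The plan is to follow the coupling strategy of \cite{lutsko-toth_19,lutsko-toth_20}: compare $X^{\varrho,\vep}$ with a ``Markovized'' auxiliary process $\widetilde X^{\varrho,\vep}$ that has the same one-step (free flight + collision) law but resamples the Poisson scatterer configuration immediately after each collision, then prove the invariance principle for the Markovized process, and finally show that the two processes coincide with probability tending to one on the time window $[0,T(\vep)]$. First, I would condition on the four scenarios $A,B,C,D$. By \eqref{scenarioprobabs}, $\probab{A}+\probab{C}\to 0$, so those cases contribute nothing in the limit. On the event $B$ the trajectory is confined to a Larmor orbit of radius $1$, hence $T(\vep)^{-1/2}X^{\varrho,\vep}(T(\vep)\,\cdot)\to 0$ uniformly; this accounts for the atom at $0$ in the limiting law, i.e.\ the value $\alpha=0$ taken with probability $e^{-8\pi}$. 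All the analytical work is on the event $D$.

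On $D$, I would build the Markovized process by running the circular Larmor flight until the first hit with a scatterer, performing the elastic reflection, and then re-sampling a fresh, independent Poisson environment (conditioned only on the local geometry at the current scatterer) before the next flight. This makes the velocity-at-collision sequence a Markov chain on $\mathbb{S}^1$ whose one-step kernel is exactly that of the linear magnetic Boltzmann equation of \cite{bobylev_95,kuzmany-spohn_98}. Standard spectral-gap/mixing arguments for this Markov chain give exponentially fast convergence to the uniform invariant measure, so the displacement increments between successive collisions of $\widetilde X^{\varrho,\vep}$ form a stationary, exponentially mixing sequence with bounded increments and zero mean (by rotational symmetry), and the functional CLT for such additive functionals (e.g.\ the Kipnis--Varadhan martingale decomposition) yields $T^{-1/2}\widetilde X^{\varrho,\vep}(T\,\cdot)\Rightarrow \sigma W(\cdot)$ with an explicit $\sigma\in(0,\infty)$ given by the Green--Kubo formula.

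The main work, and the genuine obstacle, is the coupling error. The physical MLP and the Markovized one can be realized on the same probability space so that they agree up to the first moment when $X^{\varrho,\vep}$ either (i) hits its immediately preceding (``current'') scatterer a second time, or (ii) hits a scatterer that was already present in its earlier history but not re-sampled in the Markovized environment (``past recollision''). Type (i) events are local and contribute $O(\vep)$ per collision, so $O(\vep\,T(\vep))$ in total. Type (ii) events are the truly delicate ones: because trajectories are circular Larmor arcs of radius $1$, segments of the past path can pass very close to the future path, and one needs a uniform geometric estimate bounding the probability that the next arc meets a tube of width $\vep$ around the union of the previous $n$ arcs by $O(n\vep|\!\log\vep|)$. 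Summing in $n$ up to the number of collisions $\sim T(\vep)$ gives a total error $O(\vep|\!\log\vep|\,T(\vep)^{2})$ at the naive level, which must be sharpened by exploiting that most past arcs lie far from the current position (typical separation $\gtrsim T(\vep)^{1/2}$ on event $D$) so that only an $O(\log(1/\vep))$-neighbourhood of past arcs can realistically generate a collision. This is exactly the geometric input that forces the hypothesis $\vep|\!\log\vep|\,T(\vep)\to 0$ in Theorem~\ref{thm:main-theorem}, and is where the two-dimensional magnetic geometry departs from \cite{lutsko-toth_19,lutsko-toth_20}.

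Once the coupling error is shown to be $o_{\probab{}}(1)$ on $[0,T(\vep)]$, combining with Step 2 and with the decomposition over the scenarios $A,B,C,D$ gives \eqref{inv princ} with the Bernoulli prefactor $\alpha$; tightness of the rescaled trajectories follows from the bounded increments and the mixing estimate, closing the argument.
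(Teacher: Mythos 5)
Your overall route is the same as the paper's (a Markovized auxiliary process, an invariance principle for it via a D\"oblin condition and martingale approximation, and a coupling estimate showing $X^\vep=Y^\vep$ up to times $o((\vep|\log\vep|)^{-1})$; the atom at $0$ coming from the full-Larmor-orbit event is also as intended). But there is a genuine error in your bookkeeping of what the coupling must tolerate. Recollisions with the \emph{current} scatterer are not an $O(\vep)$-per-collision event in the magnetic geometry: the trajectory is a closed Larmor circle, so conditionally on not meeting a fresh scatterer within one turn --- an event of probability bounded away from zero, uniformly in $\vep$ --- the particle returns to the scatterer it has just left with certainty. If you classify such returns as coupling breakdowns (your type (i)), the coupling fails after $O(1)$ collisions; if instead you resample the current scatterer away, your Markovized process does not even have the correct kinetic limit, since the limiting dynamics of Bobylev et al.\ and Kuzmany--Spohn contains these self-recollisions (this is the whole role of the geometric variables $\nu_j$ in the paper). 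The paper resolves this by building the self-recollisions \emph{into} the Markovized dynamics exactly, with the $\vep$-corrected angles $\beta_j,\gamma_j$ of \eqref{beta-gamma-eps}, so that they never appear as error terms; the price is that the underlying Markov chain lives on an enlarged state space, for which the D\"oblin condition is verified uniformly in $\vep$ (Propositions \ref{prop:Doeblin} and \ref{prop:Doeblin eps}).

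The second gap is that the quantitative core of the theorem --- a mismatch probability of order $\vep|\log\vep|$ \emph{per unit time, uniformly in time}, hence $O(\vep|\log\vep|\,T)$ in total --- is exactly the step you leave as ``must be sharpened.'' Your heuristic (most past arcs are at distance $\gtrsim T^{1/2}$) does not by itself produce the bound, and in fact the $|\log\vep|$ comes from near encounters, via an integral of the type $\int_\vep^{O(1)}u^{-1}du$, not from discarding far arcs. The paper's actual mechanism is: (a) the D\"oblin splitting is used to cut the Markovized trajectory into i.i.d.\ legs; (b) occupation-measure (Green's function) bounds, Lemma \ref{lem:greensbnds}, with density $\min(\vep^{-1},|x|^{-1})$ plus an exponentially localized part, are proved for the whole history up to time $T$; (c) the inter-leg mismatch probability is bounded by a convolution sum over $\vep$-boxes, giving the uniform per-leg bound $C\vep|\log\vep|$ (Proposition \ref{prop:inter}); and (d) intra-leg recollisions are handled separately by a geometric lemma for circular orbits (Lemmas \ref{lem:geometric} and \ref{lem:mismatch}). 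You also omit the shadowed collisions --- a fresh scatterer of the Markovized process placed in a region already swept by the physical trajectory --- which are the second way the coupling can break; the paper treats them via the time-reversal symmetry that exchanges recollisions and shadowings. Without these ingredients your proposal identifies the right obstacle but does not prove the key estimate.
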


The result of the recent paper \cite{nota-saffirio-simonella_19} is to show that under the Boltzmann-Grad limit \eqref{BG-Limit},
\begin{align}
\label{nss-limit}
\big\{
t\mapsto 
X^{\varrho, \varepsilon}(t)
\big\}
\Rightarrow
\big\{
t\mapsto
Y(t)
\big\}
\end{align}
where $\Rightarrow$ denotes weak convergence of trajectories in $C([0,T]\to\R^2)$ on any \emph{fixed  compact time} interval $[0,T]$ and $t\mapsto Y(t)$ is the process explicitly constructed in section \ref{ss: The limit process}.

Since the limit process $t\mapsto Y(t)$ is constructed in a simple transparent way as an additive functional of a Markov chain with strong mixing properties -- see section \ref{ss: The limit process} -- from well established martingale approximation methods (see e.g. \cite{komorowski-landim-olla_2012}) it follows, that, in a second diffusive limit, we get
\begin{align}
\label{two-steps-inv-princ}
\big\{
t\mapsto 
T^{-1/2}
Y(Tt)
\big\}
\Rightarrow
\big\{
t\mapsto
\alpha W(t)
\big\}, 
\qquad
\text{ as \ }
T\to\infty, 
\end{align} 
where $\Rightarrow$ denotes weak convergence of trajectories in $C([0,1]\to \R^2)$. However we stress again that this drops out of taking a subsequent diffusive limit, not from applying the limits simultaneously as we do in Theorem \ref{thm:main-theorem}.

\noindent
\textbf{Remark:}
In Theorem \ref{thm:main-theorem} and in \eqref{nss-limit}, the limit is meant in the annealed setting. That is, the convergence in distribution is with respect to the Poisson distribution of the scatterers and the initial velocity. The methods employed in \cite{nota-saffirio-simonella_19} and in this note are not suited for handling the quenched setting.

\subsubsection*{Outline of the proof of Theorem \ref{thm:main-theorem}}
\label{sss:Outline of the proof of Theorem thm:main-theorem}

From now on we fix $\varrho = 2\varepsilon^{-1}$ and simplify our notation to $X^{\varepsilon,\varrho}(t)=:X^{\varepsilon}(t)$.

The proof will follow the outline of what we called "naive coupling" in \cite{lutsko-toth_19}. There are, however,  some significant differences in  the details which we will emphasize on the way. We stress that we do not go beyond the "naive coupling" since anyway the more sophisticated coupling of \cite{lutsko-toth_19} yields an improved result (longer time scale of the validity of the invariance principle) only in $d\geq 3$.

For $\varepsilon>0$, we will define in section \ref{ss: The markovized version of the MLP} a so-called Markovized version $t\mapsto Y^{\varepsilon}(t)$ of the MLP. This process is defined similarly to the physical MLP,  $t\mapsto X^{\varepsilon}(t)$, except that after colliding with a fresh scatterer, the Markovized process forgets the previous scatterers. Therefore, in Figure \ref{fig:trajectory} the recollisions with the 'current scatterer' (e.g scattering {\tt (a)}) are observed, while recollisions with 'past scatterers' (scattering {\tt (b)}) are ignored. Likewise, fresh scatterers may be encountered in domains where for the physical process the existence of a past scatterer is forbidden (we call these shadowed scatterings). For a formally precise construction of the Markovized process see section  \ref{ss: The markovized version of the MLP}.

Since the Markovized process $t\mapsto Y^{\varepsilon}(t)$ is constructed in a simple transparent way as an additive functional of a Markov chain with strong mixing properties, uniformly in $\varepsilon>0$ (see section \ref{ss: The markovized version of the MLP}), from well established martingale approximation methods (see e.g. \cite{komorowski-landim-olla_2012}) the following invariance principle holds: For any sequence $T(\varepsilon)\to\infty$
\begin{align}
\label{inv-princ-for-Markovized}
\big\{
t\mapsto 
T(\varepsilon)^{-1/2}
Y^{\varepsilon}(T(\varepsilon)t)
\big\}
\Rightarrow
\big\{
t\mapsto
\alpha W(t)
\big\}, 
\qquad
\text{ as \ }
T\to\infty, 
\end{align} 
where $\Rightarrow$, $\alpha$ and $W(t)$ are as in Theorem \ref{thm:main-theorem}. 

It remains to prove that the processes $t\mapsto X^{\varepsilon}(t)$ and $t\mapsto Y^{\varepsilon}(t)$ can be realized jointly (i.e. coupled) so that they stay equal with high probability, up to times of order $T(\varepsilon)=\ordo((\varepsilon\abs{\log\varepsilon})^{-1})$. This is the content of Theorem \ref{thm:XY} below, which is the main technical result of this note.  

\subsection{The limit process}
\label{ss: The limit process}

In this section we construct explicitly the limit process $t\mapsto Y(t)$ which appears in the low density (Boltzmann-Grad) kinetic limit \eqref{nss-limit}. We stress that there is no novelty here: the process had been introduced and analysed in \cite{bobylev_95, bobylev_97, bobylev_01, kuzmany-spohn_98}. However, our presentation may be  more transparent from a probabilistic point of view. 

We will denote by 
\begin{align*}
&
t\mapsto U(t)\in{\mathbb S}^1, 
&& 
t\mapsto Y(t):=\int_0^t U(s)\, ds
\end{align*}
the limiting velocity and position process. 
It will be convenient to use complex coordinates. We construct a \emph{piecewise linear, continuous from the left} process
\begin{align*}
t\mapsto \varphi(t)\in \R/(2\pi), 
\end{align*}
such that 
\begin{align}
\label{U and Y}
&
U(t)=e^{i\varphi(t)}, 
&&
Y(t)=\int_0^t e^{i\varphi(s)}\, ds. 
\end{align}
The ingredients of the construction are the following random variables, all fully independent between them. 
\begin{align*}
\varphi_0 
&
\sim 
{\tt UNI}[0, 2\pi],
&&
\frac{d}{dx} \probab{\varphi_0<x}
=
\frac{1}{2\pi}\one(0 \le x \le 2\pi),
\\
\xi_{j}
&
\sim
{\tt EXP}(1),
&&
\frac{d}{dx} \probab{\xi_j<x}
=
e^{-x}\one(0 \le x < \infty),
&&&
1\le j < \infty,
\\
\alpha_{j}
&
\sim
2\arccos(
{\tt UNI}[-1,1]),
&&
\frac{d}{dx} \probab{\alpha_j<x}
=
\frac{1}{4} \sin(\frac{x}{2})\one(0 \le x < 2\pi),
&&&
1\le j < \infty.
\end{align*}
We also define
\begin{align*}
  \nu_j := \lfloor \xi_j/(2\pi)\rfloor,
  \qquad\qquad\qquad
  \zeta_j := \xi_j -2\pi \nu_j.
\end{align*}
Hence, $\nu_j$ and $\zeta_j$ are independent and distributed as
\begin{align*}
\nu_j
&
\sim
{\tt GEOM}(e^{-2\pi}), 
&&
\probab{\nu_j=n} 
=
e^{-2\pi n}(1-e^{-2\pi})
\one(n\geq0)
&&
1\leq j< \infty,
\\
\zeta_j
&
\sim
{\tt TRUNCEXP}(1, 2\pi),
&&
\frac{d}{dx} \probab{\zeta_j < x} 
= 
\frac{e^{-x}}{1-e^{-2\pi}}\one(0 \le x \le 2\pi),
&&
0\leq j< \infty.
\end{align*}
It is convenient to start at time $0$ with a fresh scattering. The construction will be done piece-wise, in the intervals defined by the renewal times $((\tau_{n,k})_{0\le k \le \nu_{n+1}})_{0\le n < \infty}$, as follows. 
\begin{align}\label{renewal}
& 
\tau_{n}
:=
\sum_{j=0}^{n-1}\xi_j, 
&&
\tau_{n,k}
:=
\begin{cases}
\tau_n+ 2\pi k & \text{ for } 0\leq k\leq \nu_{n},
\\
\tau_{n+1} & \text{ for }  k= \nu_{n} +1,
\\
\text{not defined}  & \text{ for }  k> \nu_{n} +1,
\end{cases}
&&
0\leq n<\infty. 
\end{align}
The times $(\tau_n)_{n\geq1}$ will be the times of collisions with \emph{fresh} scatterers not yet seen in the past. While the times $(\tau_{n,k})_{1\leq k\leq \nu_n}$ will be the times of \emph{recollisions} with the scatterer first seen at $\tau_n$.  The process $t\mapsto \varphi(t)\in\R/(2\pi)$ will be piece-wise linear, continuous from the left, with jump discontinuities at the scattering times $((\tau_{n,k})_{0\leq k\leq \nu_n})_{0\le n < \infty}$. For $0\le n < \infty$, let 
\begin{align} \label{recursion}
  \begin{aligned}
    &\varphi_{n+1} : = \varphi_n +(\nu_{n+1}+1)\alpha_{n+1} + \zeta_{n+1}, \\
    &\varphi_{n,k}^+:= \varphi_n +(k+1)\alpha_{n+1}
  \end{aligned}
  \qquad
  0 \le k \le \nu_{n+1}.
\end{align}
Then, for $t\in (\tau_{n,k},\tau_{n,k+1}]$, with $0\le n < \infty$, $0\le k \le \nu_{n+1}$,
\begin{align*}
  \varphi(t):=\varphi_{n,k}^+ +(t-\tau_{n,k})
\end{align*}
Given this, the velocity and position processes are defined by \eqref{U and Y}.

%% %% \varphi_{n,k}:=\varphi(\tau_{n,k}),
%% %% &&
%% %% \varphi^+_{n,k}:=\lim_{t\searrow \tau_{n,k}}\varphi(t),
%% %% &&
%% %% \varphi_{n}:=\varphi_{n,0}, 
%% %% &&
%% %% \varphi^+_{n}:=\varphi^+_{n,0},
%% %% &&
%% %% 0\leq k\leq \nu_n, \  1\leq n<\infty. 
%% %% \end{align*}
%% These are defined recursively, as follows: 
%% \begin{align*}
%% & 
%% \varphi_{n+1}
%% := 
%% \varphi_n^+ + \nu_n\alpha_n + \zeta_n
%% && 
%% 0\leq n < \infty,   
%% \\
%% &
%% \varphi_n^+
%% := 
%% \varphi_n+\alpha_n, 
%% && 
%% 1\leq n < \infty.
%% \end{align*}
%% Putting the two together we also have 
%% \begin{align}
%% \label{recursion}
%% & 
%% \varphi_{n+1}
%% := 
%% \varphi_n + (\nu_n+1)\alpha_n + \zeta_n
%% && 
%% 1\leq n < \infty,   
%% \end{align}
%% At the intermediate scattering times: 
%% \begin{align*}
%% &
%% \varphi_{n,k}^+
%% := 
%% \varphi_n^+ + \text{\red{$k\alpha_n$}}
%% &&
%% 0\leq k\leq \nu_n. 
%% \end{align*}
%% The process $t\mapsto \varphi(t)$ on the time intervals  $[\tau_{n,k}, \tau_{n,k+1})$, $n\geq0$, $0\leq k\leq \nu_n$, as  
%% \begin{align*}
%% \varphi(t):= \varphi_{n,k}^++(t-\tau_{n,k}).
%% \end{align*}

Since we are interested in the \emph{diffusive scaling limit} $N^{-1/2} Y(Nt)$, noting that 
\begin{align*}
\max_{\tau_{n-1}\leq t \leq \tau_{n+1}} \abs{Y(t)-Y(\tau_n)}\leq 2, 
\end{align*}
from now on we will follow only the discrete time processes
\begin{align*}
&
\varphi_n:=\varphi(\tau_n),
&&
U_n:=U(\tau_n)= 
e^{i\varphi_n}, 
&&
Y_n:= Y(\tau_n), 
&&
y_n:=Y_{n}-Y_{n-1}.
\end{align*}
We show that the successive steps $y_n$ are a rather explicit function of a rather explicit and well-behaved Markov chain.

Using \eqref{U and Y} and \eqref{recursion} we readily obtain
\begin{align}
\label{yn}
\begin{aligned}    
y_{n}
&
=
2\sin(\frac{\zeta_{n}}{2}) 
U_n 
e^{-i\frac{\zeta_{n}}{2}},
=
2\sin(\frac{\zeta_{n}}{2}) 
U_{n-1} 
e^{i((\nu_{n}+1)\alpha_{n}+\frac{\zeta_{n}}{2})}.
\end{aligned}
\end{align}
It is convenient to write $y_n = r_n e^{i\theta_n}$ with
\begin{align*}
r_n:= \abs{y_n}
\in[0,2], 
\qquad\qquad
\theta_n:=\arg(y_n)
\in[0,2\pi]. 
\end{align*}
From \eqref{yn}, we readily get
\begin{align}
\label{mc}
\theta_{1}
=
\varphi_0 +(\nu_1 +1)\alpha_1 +\frac{\zeta_{1}}{2},
\qquad
\theta_{n+1}
=
\underbrace{\theta_{n} + \frac{\zeta_n}{2}}_{\varphi_n}
+
(\nu_{n+1} +1)\alpha_{n+1} + \frac{\zeta_{n+1}}{2},
\end{align}
where the sums on the right hand side are meant $\mod{2\pi}$. This recursion shows that $(\zeta_{n+1},\theta_{n+1})$ is determined by $(\zeta_{n},\theta_{n})$ and the \emph{independently sampled} $(\xi_{n+1}, \alpha_{n+1})$. Thus $n \mapsto (\zeta_n,\theta_n)$ is a rather explicit Markov chain on the state space $[0,2\pi)\times [0,2\pi)$. 
\begin{proposition}
\label{prop:Doeblin}
\begin{enumerate}[label = (\roman*)]
   \item The distribution
     \begin{align}\label{statimeasure}
       (\zeta_{n},\theta_n) \sim (\wh{\zeta},\wh{\theta}),
     \end{align}
     where $\wh{\zeta}$ and $\wh{\theta}$ are independent random variables distributed as $\wh{\zeta} \sim {\tt TRUNCEXP}(1,\-2\pi)$ and $\wh{\theta} \sim {\tt UNI}[0,2\pi]$, is stationary for the Markov chain $(\zeta_n,\theta_n)$.
   \item Moreover, D\"oblin's condition holds: There exists a $\delta>0$ such that for any measurable subset $A\subset [0,2\pi) \times[0,2\pi)$
\begin{align}
\label{doeblin}
\condprobab
{(\zeta_{n+1}, \theta_{n+1}) \in A}
{(\zeta_n, \theta_n)} 
\geq 
\delta 
\probab{(\wh \zeta, \wh \theta)\in A}, 
\end{align}
uniformly in the condition $(\zeta_n , \theta_n) \in [0,2\pi)\times [0,2\pi)$. In particular, the distribution \eqref{statimeasure} is the unique stationary measure of the Markov chain $(\zeta_n,\theta_n)$.
\end{enumerate}
\end{proposition}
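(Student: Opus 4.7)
The plan is to use the recursion \eqref{mc} directly, exploiting the fact that $\zeta_{n+1}$ and $(\nu_{n+1}, \alpha_{n+1})$ are sampled afresh and hence independent of $(\zeta_n, \theta_n)$.

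For part (i), the $\zeta_{n+1}$-marginal is automatically correct by construction. For the $\theta$-coordinate, assume $\theta_n \sim \mathtt{UNI}[0, 2\pi)$, independent of $\zeta_n$ and of the fresh samples $(\xi_{n+1}, \alpha_{n+1})$. Then \eqref{mc} writes $\theta_{n+1}$ as $\theta_n$ plus a term independent of $\theta_n$, taken modulo $2\pi$. By translation invariance of the uniform distribution on the circle $\R/(2\pi)$, $\theta_{n+1}$ is uniform on $[0, 2\pi)$ conditionally on $(\zeta_n, \nu_{n+1}, \alpha_{n+1}, \zeta_{n+1})$. This delivers the claimed marginal together with independence from $\zeta_{n+1}$, and thus stationarity.

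The substantive content is (ii). The idea is to condition on $(\zeta_n, \theta_n) = (z_0, \theta_0)$ and additionally on $\zeta_{n+1} = s$; since the marginal of $\zeta_{n+1}$ already produces the $\mathtt{TRUNCEXP}$ factor of the stationary measure, it suffices to bound the conditional density of $\theta_{n+1}$ below by some $\delta' > 0$ uniformly in $(z_0, \theta_0, s)$. Writing $c := \theta_0 + z_0/2 + s/2 \pmod{2\pi}$, the recursion gives $\theta_{n+1} \equiv c + (\nu_{n+1} + 1)\alpha_{n+1} \pmod{2\pi}$; translation invariance on the circle then reduces the problem to a uniform lower bound on the Lebesgue density of $W := (\nu_{n+1} + 1)\alpha_{n+1} \pmod{2\pi}$ on $[0, 2\pi)$.

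This last lower bound is the only nontrivial step and is where care is required. The $\{\nu_{n+1} = 0\}$ term alone is insufficient, since it yields $\tfrac14 \sin(w/2)$, which vanishes at the endpoints $w=0, 2\pi$. Restricting instead to $\{\nu_{n+1} = 1\}$, an event of positive probability $p_1 := e^{-2\pi}(1 - e^{-2\pi})$, one has $W = 2\alpha_{n+1} \pmod{2\pi}$, and a one-line change-of-variables computation yields the density
\[
\tfrac{1}{8}\bigl[\sin(w/4) + \cos(w/4)\bigr] = \tfrac{\sqrt 2}{8}\sin\bigl(w/4 + \pi/4\bigr) \geq \tfrac{1}{8}, \qquad w \in [0, 2\pi),
\]
since $w/4 + \pi/4 \in [\pi/4, 3\pi/4]$. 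Hence $\delta' \geq p_1/8$, and Doeblin's condition \eqref{doeblin} follows with $\delta = 2\pi \delta' = \tfrac{\pi}{4} e^{-2\pi}(1 - e^{-2\pi})$. Uniqueness of the stationary measure is then the standard consequence of \eqref{doeblin}.
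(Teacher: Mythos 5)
Your proof is correct and follows essentially the same route as the paper: part (i) by translation invariance of the uniform law on the circle, and part (ii) by bounding below the density of $(\nu_{n+1}+1)\alpha_{n+1} \bmod 2\pi$, which the paper merely asserts (noting the importance of working modulo $2\pi$) while you substantiate it explicitly through the $\{\nu_{n+1}=1\}$ contribution, whose wrapped density $\tfrac18[\sin(w/4)+\cos(w/4)]\ge\tfrac18$ fills in the zeros of the $\{\nu_{n+1}=0\}$ density at the endpoints. The resulting explicit constant $\delta=\tfrac{\pi}{4}e^{-2\pi}(1-e^{-2\pi})$ is a correct (if not optimized) Doeblin constant, and the rest of your argument matches the paper's.
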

\begin{proof}
  \emph{(i)} follows from the explicit expressions in \eqref{mc}. Indeed, assuming $(\zeta_n,\theta_n) \sim (\wh{\zeta},\wh{\theta})$, it follows that $\theta \sim {\tt UNI}(0,2\pi)$ is independent of the pair $(\zeta_{n+1}, \frac{\zeta_n}{2} + (\nu_{n+1} +1) \alpha_{n+1} + \frac{\zeta_{n+1}}{2})$. Thus, $\theta_{n+1} \sim {\tt UNI}(0,2\pi)$ and is independent of $\zeta_{n+1}$ and hence $(\zeta_{n+1}, \theta_{n+1})\sim (\wh{\zeta}, \wh{\theta})$, as well. For \emph{(ii)}, the random variable $(\nu_{n+1}+1)\alpha_{n+1} \mod{2\pi}$ is independent of the pair $(\zeta_{n+1}, \theta_n + \frac{\zeta_n}{2} + \frac{\zeta_{n+1}}{2})$ and its distribution density (on the interval $[0,2\pi]$) is bounded from below (note the importance of the fact we are working modulo $2\pi$). Hence \eqref{doeblin}.

\end{proof}

\subsection{The Markovized version of the MLP}
\label{ss: The markovized version of the MLP}

In this section we construct a Markovized version of the MLP. The construction differs from the construction of the limit process, in section \ref{ss: The limit process}, as the scatterers now have radii $\varepsilon>0$. This causes some formal complications. However, the essence is very similar. 

In plain words the Markovized process flies for a truncated exponential time, then collides with a 'fresh scatterer'. After a collision with a fresh scatterer, the Markovized process forgets about all previous scatterers except the scatterer of the previous collision. The process then behaves exactly like a physical Lorentz process with only one scatterer in the plane. Then after an exponential time, regardless of the physical limitations, the Markovized process collides with another fresh scatterer and forgets about all previous scatterers. We illustrate a sample trajectory for the Markovized MLP in Figure \ref{fig:markovized}.

%%%%%%%%%%%%%%%%%%%%%%%%%%%%%%%%%%%%%%%%%%%%%%%%%%%%%%%%%%%%%%%%%%%%%%%%%%%%%%%%
%------------------------FIGURE: EXAMPLE_PATH-----------------------------------
%%%%%%%%%%%%%%%%%%%%%%%%%%%%%%%%%%%%%%%%%%%%%%%%%%%%%%%%%%%%%%%%%%%%%%%%%%%%%%%%

\begin{figure}[t!]
  \begin{center}    
    \includegraphics[width=0.8\textwidth]{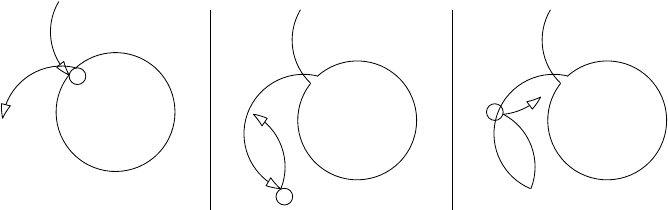}
  \end{center}
  \caption{%
    {\tt We illustrate a sample trajectory of the Markovized version of the MLP at three successive times. Note that the third panel would not be possible for the physical MLP, since the scatterer is placed in a region of space which was already explored (a shadowed collision). However, the Markovized version ignores this physical complication and collides anyways.
    }   
  }%
  
  \label{fig:markovized}

\end{figure}

The probabilistic ingredients used in this construction are the same as in the previous one:  $\varphi_0 \sim {\tt UNI}[0, 2\pi]$, $(\xi_j)_{j \ge 1} \sim {\tt EXP}(1)$ and $(\alpha_j)_{j \ge 1} \sim 2 \arccos({\tt UNI}[-1,1])$, all fully independent. We also define the variables $\beta_j\in[0,2\varepsilon]$, $\gamma_j\in[0,2\pi]$ by the formulas
\begin{align}
\label{beta-gamma-eps}
\begin{aligned}
&
\beta_j
=
\beta(\alpha_j,\vareps)
:=
2\arctan\frac{\vareps \sin(\alpha_j/2)}{1+\varepsilon\cos (\alpha_j/2)}, 
\\
&
\gamma_j
=
\gamma(\alpha_j,\vareps)
:=
2\arctan\frac{\sin(\alpha_j/2)}{\cos (\alpha_j/2)+\varepsilon}.
\end{aligned}
\end{align}
and write 
\begin{align}
\label{exp=geom+trexp-eps}
\begin{aligned}
&
\nu_j
=
\nu(\xi_j, \alpha_j, \vareps)
:= 
\lfloor\xi_j/(2\pi-\beta_j)\rfloor, 
\\
&
\zeta_j
=
\zeta(\xi_j, \alpha_j, \vareps)
:= 
\xi_j-\nu_j(2\pi-\beta_j). 
\end{aligned}
\end{align}
Note that, \emph{conditionally on $\alpha_j$} (and thus, on the value of $\beta_j$),  the random variables $\nu_j$ and $\zeta_j$ are independent and distributed as
\begin{align}
\label{geom+trexp-distrib-eps}
\begin{aligned}
\probab{\nu_j=n} 
&
=
e^{-(2\pi-\beta_j)n}(1-e^{-(2\pi-\beta_j)})\one(n\geq0)
\sim 
{\tt GEOM}(e^{-(2\pi-\beta_j)}), 
\\
\frac{d}{dx} \probab{\zeta_j < x} 
&
=
\frac{e^{-x}}{1-e^{-(2\pi-\beta_j)}}\one(0 \le x \le 2\pi-\beta_j)
\sim
{\tt TRUNCEXP}(1,2\pi-\beta_j). 
\end{aligned}
\end{align}
We will also use 
\begin{align}
\label{epsi-eps}
\epsilon_j:= 
\epsilon(\xi_j, \alpha_j,\varepsilon)
:=
1-2\lfloor \zeta_j/\pi \rfloor.
\end{align}
The  renewal times $\tau_n$ are the same as in \eqref{renewal}.  

We will use the same complex notation as in the previous section and construct inductively the variables
\begin{align}
\label{YnUn-eps}
&
U^\vep(\tau_n^-)=:U_n^\vep=:e^{i\varphi_n}, 
&&
Y^\vep(\tau_n)=:Y^\vep_n, 
&&
Y^\vep_{n}-Y^\vep_{n-1}=:y_n=:\wh y_n+\wt y_n.
\end{align}
using the collection of fully independent random variables $\left(\varphi_0, (\xi_n)_{n\ge 1}, (\alpha_n)_{n\ge 1}\right)$. 
Let $Y^\vep_0:=0$, $U^\vep_0:=e^{i\varphi_0}$ and proceed inductively as shown in \eqref{limit comps-eps} below. 
To distinguish between $(\phi \text{ mod } 2\pi)/2$ and $(\phi/2) \text{ mod } 2\pi$ we will use the notation
\begin{align}
\frac{\{\phi\}}{2}
:=
\frac{\phi-2\pi\lfloor\phi/2\pi\rfloor}{2}
\end{align} 
 
\begin{align}
\label{limit comps-eps}
\begin{aligned}
\varphi_{n+1} 
&
:= 
\varphi_n +(\nu_{n+1}+1)\alpha_{n+1} -\nu_{n+1}\beta_{n+1} + \zeta_{n+1}
\\
U^\vep_{n+1}
&
:=
e^{i\varphi_{n+1}}
=
e^{i(\varphi_n + (\nu_{n+1}+1)\alpha_{n+1} - \nu_{n+1}\beta_{n+1} + \zeta_{n+1}}
\\ 
\wh y_{n+1}
&
:=
2 \sin(\frac{\{\nu_{n+1}\gamma_{n+1}\}}{2}) 
e^{i (\varphi_n + \frac{\alpha_{n+1}}{2} + \frac{\{\nu_{n+1}\gamma_{n+1}\}}{2} )}
\\ 
\wt y_{n+1}
&
:=
2\sin(\frac{\zeta_{n+1}}{2}) 
e^{i(\varphi_n + (\nu_{n+1}+1)\alpha_{n+1} - \nu_{n+1}\beta_{n+1} + \frac{\zeta_{n+1}}{2})}
\\
Y^\vep_{n+1}
&
:=
Y_n^\vep+\vep\wh y_{n+1}+\wt y_{n+1}
\end{aligned}
\end{align} 
One can easily check that these formulas cover exactly what was said in plain words in an earlier paragraph. The continuous time evolution $t\mapsto (U^\vep(t), Y^\vep(t))$ within the time intervals $[\tau_n, \tau_{n+1})$ also has explicit expression. However, as we don't need it for the rest of the arguments we will not give it here. 

It is convenient to write
\begin{align}
\label{convenient-eps}
\begin{aligned}
& 
\wh r_n:= \abs{\wh y_n}, 
&& 
\wh \theta_n:=\arg(\wh y_n), 
&&
\wh y_n=\wh r_n e^{i\wh \theta_n},
\\ 
& 
\wt r_n:= \abs{\wt y_n}, 
&& 
\wt \theta_n:=\arg(\wt y_n), 
&&
\wt y_n=\wt r_n e^{i\wt \theta_n},
\end{aligned}
\end{align}
Then the recursion \eqref{limit comps-eps} is rewritten as
\begin{align}
\label{recursion-eps}
\begin{aligned}
&
\epsilon_{1}
=
\epsilon(\frac{\zeta_{1}}{2}), 
&&
\wt{r}_{1}
=
2\sin(\frac{\zeta_{1}}{2}), 
&&
\wt{\theta}_{1}
=
\varphi_0+(\nu_{1}+1)\alpha_{1}+\frac{\zeta_{1}}{2}
\\
&
\epsilon_{n+1}
=
\epsilon(\frac{\zeta_{n+1}}{2}), 
&&
\wt{r}_{n+1}
=
2 \sin(\frac{\zeta_{n+1}}{2}), 
&&
\wt{\theta}_{n+1}
=
\wt{\theta}_{n}+\eps_{n}\arcsin(\wt{r}_n/2)+(\nu_{n+1}+1)\alpha_{n+1}+\frac{\zeta_{n+1}}{2},
\end{aligned}
\end{align}
 and, recalling that $\beta_n+\gamma_n = \alpha_n$
\begin{gather}
\label{recursion-eps wh}
\begin{gathered}
\wh{r}_{1}
=
2\sin(\{\nu_{1} \gamma_1\}/2), 
\qquad
\wh{\theta}_{1}
=
\varphi_0+\alpha_{1}/2+\{\nu_1\gamma_1\}/2
\\
\wh{r}_{n+1}
=
2\sin(\{\nu_{n+1} \gamma_{n+1}\}/2), 
\\
\wh{\theta}_{n+1}
=
\begin{cases}
  0 & \mbox{ if } \nu_{n+1}=0\\
  \wt{\theta}_{n} + \epsilon_n \arcsin(\frac{\wt{r}_n}{2}) + \frac{\alpha_{n+1}}{2} + \arcsin(\frac{\wh{r}_{n+1}}{2}) & \mbox{otherwise}
\end{cases}
\end{gathered}
\end{gather}
Thus, since $\zeta_n$ can be written in terms of $\epsilon_n$ and $\wt{r}_n$, we conclude that
\begin{align*}
  (\epsilon_{n+1}, \wt{r}_{n+1},\wt{\theta}_{n+1},\wh{r}_{n+1},\wh{\theta}_{n+1})
\end{align*}
is determined by $(\epsilon_{n}, \wt{r}_{n},\wt{\theta}_{n},\wh{r}_{n},\wh{\theta}_{n})$ and the independently sampled $(\alpha_{n+1},\xi_{n+1})$. Thus, we have a Markov chain on the state space $\{-1,1\}\times [0,2]\times[0,2\pi]\times[0,2]\times [0,2\pi]$. We denote this Markov chain $\psi_n^\vep$.

Note that, if we condition on $\nu_{n+1}=0$, then $\wh{r}_{n+1}=0$ and $\wh{\theta}_{n+1}=0$. In which case the Markov process is identical to the Markov process in Section \ref{ss: The limit process}, hence D\"oblin's condition applies also in this case with respect to the random variable
\begin{align*}
  \Psi(\theta, \zeta) = (1-2\lfloor \zeta/\pi\rfloor, 2 \sin (\zeta/2),\theta, 0,0)
\end{align*}
where $\theta \sim {\tt UNI}[0,2\pi]$, and $\zeta \sim {\tt TRUNCEXP}(1,2\pi)$. By Proposition \ref{prop:Doeblin} the following proposition is immediate.

\begin{proposition}\label{prop:Doeblin eps}
   The Markov chain $n \mapsto \psi_n^\vep$ satisfies the D\"{o}blin condition uniformly in $\varepsilon$ with respect to $\Psi$: There exists a $\delta >0$ such that for any $\varepsilon>0$ sufficiently small and any measurable subset $A \subset  \{-1,1\}\times [0,2]\times[0,2\pi]\times[0,2]\times [0,2\pi]$
  \begin{align}\label{Doeb vep}
    \condprobab{\psi^\vep_{n+1} \in A}{\psi^\vep_n} \ge \delta \probab{\Psi \in A },
  \end{align}
  uniformly in the condition. 
\end{proposition}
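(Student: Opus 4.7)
The plan is to reduce \eqref{Doeb vep} to one application of Proposition \ref{prop:Doeblin} by conditioning on the event $\{\nu_{n+1}=0\}$. First, $\mu_\Psi$ is supported on the three-dimensional slice $S:=\{-1,1\}\times[0,2]\times[0,2\pi]\times\{0\}\times\{0\}$, so for $A$ disjoint from $S$ both sides of \eqref{Doeb vep} vanish and it suffices to verify the bound for $A=A_0\times\{0\}\times\{0\}$ with $A_0\subset\{-1,1\}\times[0,2]\times[0,2\pi]$. Inspection of \eqref{recursion-eps wh} shows that $(\wh r_{n+1},\wh\theta_{n+1})=(0,0)$ if and only if $\nu_{n+1}=0$, whence
\begin{align*}
\condprobab{\psi^\vep_{n+1}\in A}{\psi^\vep_n}
=
\condprobab{\nu_{n+1}=0,\,(\eps_{n+1},\wt r_{n+1},\wt\theta_{n+1})\in A_0}{\psi^\vep_n}.
\end{align*}

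From \eqref{exp=geom+trexp-eps}--\eqref{geom+trexp-distrib-eps} and the bound $\beta_{n+1}\le 2\vep$ one extracts $\condprobab{\nu_{n+1}=0}{\alpha_{n+1}}=1-e^{-(2\pi-\beta_{n+1})}\ge 1-e^{-\pi}$ for $\vep<\pi/2$, uniformly in $\alpha_{n+1}$, and the same lower bound survives integration over $\alpha_{n+1}$. Meanwhile, on the event $\{\nu_{n+1}=0\}$ the update of the three active coordinates $(\eps_{n+1},\wt r_{n+1},\wt\theta_{n+1})$ given by \eqref{recursion-eps} coincides with a single step of the limit chain $(\zeta_n,\theta_n)$ of Section \ref{ss: The limit process} under the identifications $\eps\leftrightarrow 1-2\lfloor\zeta/\pi\rfloor$, $\wt r\leftrightarrow 2\sin(\zeta/2)$, $\wt\theta\leftrightarrow\theta$. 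The only discrepancy is that the driving variable $\zeta_{n+1}$ is ${\tt TRUNCEXP}(1,2\pi-\beta_{n+1})$ rather than ${\tt TRUNCEXP}(1,2\pi)$, so the corresponding densities differ pointwise by a factor $1+O(\vep)$. Threading these observations through the proof of Proposition \ref{prop:Doeblin} therefore produces a constant $\delta'>0$, independent of $\vep$ small, for which
\begin{align*}
\condprobab{(\eps_{n+1},\wt r_{n+1},\wt\theta_{n+1})\in A_0}{\psi^\vep_n,\,\nu_{n+1}=0}
\ge\delta'\,\probab{(\wh\eps,\wh r,\wh\theta)\in A_0},
\end{align*}
and multiplying by the lower bound on $\condprobab{\nu_{n+1}=0}{\psi^\vep_n}$ yields \eqref{Doeb vep} with $\delta:=(1-e^{-\pi})\delta'$.

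The main obstacle I anticipate is this final density-transfer step. The minorization in Proposition \ref{prop:Doeblin} is supplied by the fact that $(\nu_{n+1}+1)\alpha_{n+1}\bmod 2\pi$ has density bounded below on $[0,2\pi]$, a property secured by the geometric mixing of $\nu_{n+1}$; in the Markovized setting one must track simultaneously both the $\vep$-dependent modification of the geometric law in \eqref{geom+trexp-distrib-eps} and the weak coupling between $\alpha_{n+1}$ and $\zeta_{n+1}$ introduced through $\beta(\alpha_{n+1},\vep)$. Both effects are $O(\vep)$ perturbations and can be absorbed into a marginally smaller $\delta'$ once $\vep$ is sufficiently small; the remaining manipulations are routine bookkeeping.
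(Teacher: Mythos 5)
Your reduction to the slice $\{\wh r=\wh\theta=0\}$ and the lower bound $\condprobab{\nu_{n+1}=0}{\psi^\vep_n}\ge 1-e^{-\pi}$ follow the same route as the paper's own (very terse) argument, but the displayed intermediate inequality — the minorization of the conditional law of $(\epsilon_{n+1},\wt r_{n+1},\wt\theta_{n+1})$ \emph{given} $\nu_{n+1}=0$ by $\delta'$ times the law of $(\wh\epsilon,\wh r,\wh\theta)$ — is not an $O(\vep)$ perturbation of Proposition \ref{prop:Doeblin}, and as stated it fails. On $\{\nu_{n+1}=0\}$ the angular update is $\wt\theta_{n+1}=\varphi_n+\alpha_{n+1}+\tfrac{\zeta_{n+1}}{2}$, so the only randomness entering $\wt\theta_{n+1}$ besides $\zeta_{n+1}$ is $\alpha_{n+1}$, whose density $\tfrac14\sin(x/2)$ vanishes at $x=0,2\pi$. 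Hence the conditional density of $\wt\theta_{n+1}$ given $\zeta_{n+1}$ and $\psi^\vep_n$ vanishes along the curve $\theta=\varphi_n+\zeta/2\ (\bmod\, 2\pi)$, while the target $\wh\theta\sim{\tt UNI}[0,2\pi]$ has density bounded below: taking $A_0$ a strip of width $\eta$ around that curve makes the left side $O(\eta^2)$ against a right side of order $\eta$, so no $\delta'>0$ works — and this already happens at $\vep=0$, i.e.\ for the limit chain conditioned on $\nu_{n+1}=0$. You correctly identify that Proposition \ref{prop:Doeblin}(ii) is powered by the lower-bounded density of $(\nu_{n+1}+1)\alpha_{n+1}\bmod 2\pi$, i.e.\ by the wrap-around over the geometric $\nu_{n+1}$; but conditioning on $\nu_{n+1}=0$ removes exactly that mechanism, and the loss cannot be absorbed into a slightly smaller constant. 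A second, lesser defect of the ``densities differ by $1+O(\vep)$'' claim: on $\{\nu_{n+1}=0\}$ one has $\zeta_{n+1}<2\pi-\beta_{n+1}$, so the kernel puts zero mass on $\zeta\in[2\pi-\beta_{n+1},2\pi)$, where ${\tt TRUNCEXP}(1,2\pi)$ has positive mass; that is a support deficit, not a pointwise density factor.

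Note also that the restriction to $\{\nu_{n+1}=0\}$ is forced, not optional: $\Psi$ charges only the slice $\wh r=\wh\theta=0$, and $\psi^\vep_{n+1}$ lands on that slice (a.s.) only when $\nu_{n+1}=0$, so the $\nu_{n+1}\ge1$ contributions cannot repair the one-step bound. The angular smoothing therefore has to be recovered elsewhere — for instance by proving the minorization for the two-step kernel (the first, unconstrained step supplies the bounded-below density of $(\nu+1)\alpha\bmod 2\pi$, the second step takes $\nu=0$ to land in the support of $\Psi$), together with an adjustment of the $\zeta$-marginal of $\Psi$ (shaving an $\vep$-sliver off its support) to handle the cap $\zeta\le 2\pi-\beta$, and then checking that this modified minorization still drives the regeneration construction of Section \ref{ss:Doeblin Condition}. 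You have located the right pressure point — it is precisely the step the paper leaves implicit — but dismissing it as routine $O(\vep)$ bookkeeping does not close it.
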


\subsubsection{Magnetic Lorentz process}
  \label{ss:Magnetic Lorentz Process}

  Now we define the physical, MLP process $\{t \mapsto X^\vep(t)\}$ coupled to $\{t \mapsto Y^\vep(t)\}$. The idea is the following: we will have $X^\vep(0) = Y^\vep(0)=0$ and $X^\vep(t) = Y^\vep(t)$ for all $t>0$ until the first time the $Y^\vep$-process does something which is forbidden in the dynamics of the physical process. At which point, we simply stop the $X$-process. In the subsequent sections we show that, with high probability, this stopping time will occur at time $\vep^{-1} \abs{\log \vep}^{-1}$. Therefore until time $T(\vep) = o(\vep^{-1}\abs{\log \vep}^{-1})$ both processes are (w.h.p) well-defined. 

  Define the following indicator functions:
  \begin{align}\label{mismatches}
    \begin{aligned}
      & \wh{\eta}_j := \one\left( \left\{\exists t < \tau_{j-1} : \abs{Y_{j}^{\vep} - Y^\vep(t)}< \vep \right\}\right) \\
      & \wt{\eta}_j := \one\left( \left\{\exists i < j-1 \quad \& \quad t \in [\tau_{j-1},\tau_j) : \abs{Y_{i}^\vep - Y^\vep(t)} < \vep \right\}\right)        \\
      & \eta_j      := \max\{\wt{\eta}_j,\wh{\eta}_j\}.
    \end{aligned}
  \end{align}
  That is, $\wh{\eta}_j$ is the indicator of the event that the $Y^\vep$-process performs a jump and the scatterer corresponding to that jump blocks a previously explored region. We call this a \emph{shadowed scattering}. $\wt{\eta}_j$ is the indicator of the event that, during the interval $[\tau_{j-1},\tau_j)$, the $Y^\vep$-process \emph{recollides} with a previously placed scatterer \emph{other than the 'current' scatterer} (i.e a recollision with a scatterer other than the scatterer at position $Y_{j-1}^\vep$). Note that the $Y^\vep$-process ignores this \emph{recollision with a past scatterer}. $\eta_j$ is the indicator that either of these \emph{mismatches} occurs for the $j^{th}$ path segment. These mismatches are illustrated in Fig. \ref{fig:mismatch}.

%%%%%%%%%%%%%%%%%%%%%%%%%%%%%%%%%%%%%%%%%%%%%%%%%%%%%%%%%%%%%%%%%%%%%%%%%%%%%%%%
%------------------------FIGURE: EXAMPLE_PATH-----------------------------------
%%%%%%%%%%%%%%%%%%%%%%%%%%%%%%%%%%%%%%%%%%%%%%%%%%%%%%%%%%%%%%%%%%%%%%%%%%%%%%%%

\begin{figure}[t!]
  \begin{center}    
    \includegraphics[width=0.7\textwidth]{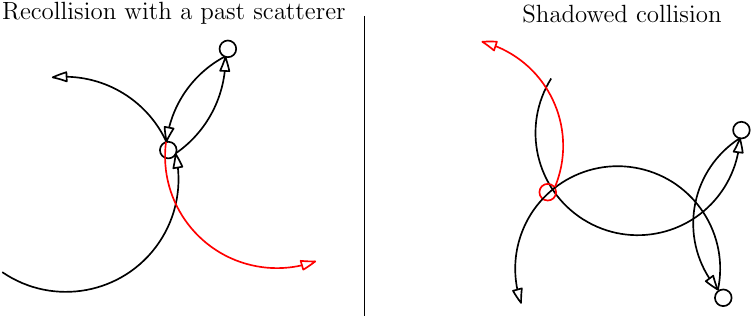}
  \end{center}
  \caption{%
    {\tt On the left we give an example of a recollision with a past scatterer. The physical MLP (black) must respect this collision. Whereas the Markovized MLP (red) has forgotten about the past scatterer and thus ignores the recollision. On the right we give an example of a shadowed collision. Here the Markovized MLP (red) changes direction in the third collision. However this scatterer is in previously explored space, thus the physical MLP must ignore the collision and continues in its trajectory. We call this a shadowed collision. 
    }   
  }%
  
  \label{fig:mismatch}

\end{figure}

    Now we define the following stopping time 
    \begin{align}\label{rho}
      \rho := \min \{ n :\eta_n = 1 \}.
    \end{align}
    For $t \in [0,\tau_{\rho-1})$ we set $X^\vep(t) = Y^\vep(t)$. Thus, the mechanical $X^\vep$-process is defined up to the first collision \emph{before a mismatch}. It would be possible to include a recoupling procedure (as we did for the classical Lorentz gas in $3$ dimensions \cite{lutsko-toth_19}) allowing the $X^\vep$ process to be defined after this stopping time. However, our techniques do not allow us to extend the invariance principle to these times (this is a result of the recurrence of the random walk in $2$ dimensions), hence there is no need to over-complicate the exposition by defining $X^\vep$ beyond $\tau_{\rho-1}$.

\subsubsection{Main technical result}
\label{ss:Main Technical Result}

The main technical theorem which implies Theorem \ref{thm:main-theorem} is

%%%%%%%%%%%%%%%%%%%%%%%%%%%%%%%%%%%%%%%%%%%%%%%%%%%%%%%%%%%%%%%%%%%%%%%%%%%%%%%%
% -----------------Theorem XY---------------------------------------------------
%%%%%%%%%%%%%%%%%%%%%%%%%%%%%%%%%%%%%%%%%%%%%%%%%%%%%%%%%%%%%%%%%%%%%%%%%%%%%%%%

\begin{theorem}\label{thm:XY}
  Let $T=T(\vep)$ be such that $\lim_{\vep\to 0}T(\vep) = \infty$ and $\lim_{\vep \to 0 } T(\vep) \vep \abs{\log \vep} =0$. Then
  \begin{align}\label{X not Y}
    \lim_{\vep \to 0} \probab{ \tau_{\rho}\le T} = 0.
  \end{align}

\end{theorem}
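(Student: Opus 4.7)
The plan is to reduce $\{\tau_\rho\le T\}$ to a union bound over the discrete mismatch indicators $\eta_j$ and then control each term by a Green's-function-type estimate for the Markovized walk $n\mapsto Y_n^{\vep}$. Picking $N:=\lceil 2T\rceil$, I split
\begin{align*}
\probab{\tau_\rho\le T}
\le
\probab{\rho\le N} + \probab{\tau_N<T}
\le
\sum_{j=1}^N \probab{\eta_j = 1} + e^{-cT},
\end{align*}
the exponential bound on the second term being Cram\'er/Chernoff for a sum of i.i.d.\ ${\tt EXP}(1)$ random variables.

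Both $\wh\eta_j=1$ and $\wt\eta_j=1$ express the same kind of geometric condition: a curve of $O(1)$ length (the $j$-th arc, respectively a past arc) passes within distance $\vep$ of a specific point of the Markovized trajectory. Expanding according to the past-arc index $i$ and conditioning on the Markov-chain state $\psi_i^\vep$ at the end of the $i$-th arc, the displacement $Y_{j-1}^\vep - Y_i^\vep$ is an additive functional of $(j-1-i)$ steps of the uniformly Doeblin chain from Proposition \ref{prop:Doeblin eps}. The $\vep$-tube around an arc of expected length $1$ has area $O(\vep)$, so that, given a uniform-in-$\vep$ two-dimensional local central limit theorem
\begin{align*}
\sup_{x\in\R^2} p_n^\vep(x) \le \frac{C}{n}, \qquad n\ge 1,
\end{align*}
where $p_n^\vep$ denotes the density of $Y_{k+n}^\vep - Y_k^\vep$, each past index contributes at most $C\vep/(j-1-i)$, and summation over $i$ yields $\probab{\eta_j=1}\le C'\vep\log(j+1)$.

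Summing over $j\le N$ with $N\lesssim T$,
\begin{align*}
\probab{\tau_\rho\le T}
\le
e^{-cT} + C\vep N\log N
\le
C''\vep T\abs{\log\vep}(1+o(1)),
\end{align*}
where the last inequality uses $T\le(\vep\abs{\log\vep})^{-1}$ to conclude $\log T\le\abs{\log\vep}(1+o(1))$. The right-hand side tends to $0$ by the standing assumption $\vep T\abs{\log\vep}\to0$, proving the theorem.

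The main obstacle is the uniform-in-$\vep$ local density bound $\sup_x p_n^\vep(x)\le C/n$, a genuinely two-dimensional local-CLT statement. The Doeblin property of Proposition \ref{prop:Doeblin eps} alone yields geometric mixing and the ordinary (non-local) CLT; upgrading to a local density statement requires verifying, uniformly for small $\vep$, that the single-step increment law is non-degenerate in $\R^2$. This is what the explicit decomposition $y_n=\vep\wh y_n+\wt y_n$ from \eqref{limit comps-eps}--\eqref{recursion-eps wh} is designed to exploit, and is the content of the Appendix, Section \ref{s: Appendix: Random walk estimates}. Once that density estimate is in hand, the remainder of the argument is the essentially deterministic geometric bookkeeping of arc lengths and $\vep$-tube areas sketched above.
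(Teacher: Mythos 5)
Your global architecture (union bound over the $\sim T$ path segments, Chernoff bound for $\probab{\tau_N<T}$, two-dimensional Green-function-type decay producing the $|\log\vep|$, and the final arithmetic $\vep T|\log\vep|\to0$) is parallel to the paper's, but the estimate on which everything rests is asserted rather than proved, and as stated it is false. The claimed uniform local CLT $\sup_x p_n^\vep(x)\le C/n$ fails for small $n$: a single increment has modulus $2\sin(\zeta/2)$ with $\zeta$ of bounded density near $0$, so the two-dimensional one-step density has a $|x|^{-1}$ singularity at the origin (and the two-step density is still logarithmically divergent). This singularity is not a technicality to be absorbed in constants: it is exactly what makes the near-diagonal terms larger than your accounting allows. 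The probability that the $j$-th arc returns within $\vep$ of the immediately preceding scatterer is of order $\expect{\min(\vep/|y_{j-1}|,1)}\sim\vep|\log\vep|$, not $C\vep/(j-1-i)$ with $j-1-i=1$; this is why the paper proves the Green's function bounds of Lemma \ref{lem:greensbnds} with densities $\min\{\vep^{-1},|x|^{-1}\}$ and $|x|^{-1}e^{-c|x|}$ instead of a local CLT, and why Lemma \ref{lem:mismatch} treats direct recollisions separately and still only gets $C\vep|\log\vep|$ per step. (Your final sum survives this correction, but only because the same $|\log\vep|$ appears anyway; the point is that the mechanism you invoke does not produce the bound you claim.)

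There are two further places where the "tube of area $O(\vep)$ times density $C/n$" heuristic does not close. First, the shape and position of the $j$-th arc are not independent of the displacement $Y^\vep_{j-1}-Y^\vep_i$: they share the chain state (in particular the outgoing direction $\varphi_{j-1}$), and when $\nu_j\ge1$ the arc consists of repeated near-circular loops; handling this dependence and the looping is precisely the content of the geometric Lemma \ref{lem:geometric} (with its factor $\nu_j+1$) and of the paper's decomposition into i.i.d.\ legs at Doeblin regeneration times with the additional requirement $\nu=0$ at the leg boundaries, together with the bounded-density-of-the-first-step convolution trick used in Lemma \ref{lem:greensbnds} and Proposition \ref{prop:inter}. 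Second, the shadowed events $\wh\eta_j$ compare the new point $Y^\vep_j$ with the \emph{continuous-time} past trajectory, so you need an occupation bound for $t\mapsto Y^\vep(t)$ (the paper's $H$), not only a density bound for the discrete walk; the paper gets this from $h\le L$ and time reversal. In short, the conclusion and the order of magnitude are right, but the key lemma you defer to the appendix is not available in the form you state it, and repairing it leads you essentially to the occupation-measure estimates and the leg decomposition that constitute the paper's actual proof.
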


The proof of Theorem \ref{thm:XY} follows two steps: First, we use D\"oblin's inequality to decompose the process $\{t\mapsto Y^\vep \}$ into i.i.d legs. These legs will form a  stationary random walk. This decomposition then allows for control of the associated Green's function, which gives a bound for the probability of a mismatch between two different legs. The second step is to then use geometric arguments to show that within each separate leg there are no mismatches. From there, Theorem \ref{thm:XY} follows immediately. 

\section{Proof}
\label{s: Proof}

The proof of Theorem \ref{thm:XY} depends only on $\{t \mapsto Y^\vep\}$. To ease the notation, we henceforth drop the $\vep$ from the label.

\subsection{Decomposition into legs}
\label{ss: Decomposition into legs}

There are two issues which prevent us from directly applying some standard probabilistic techniques for Green's functions. The first is the 'self-recollisions' (i.e when $\nu > 0$). These complicate certain probabilistic arguments we make later. The second is the Markov chain itself, which we want to separate into independent pieces. To address these two issues we decompose the random data into 'packs'. On each of these packs, we construct a 'leg' of the Markovized process. The legs are then i.i.d.

\subsubsection{D\"oblin condition}
\label{ss:Doeblin Condition}

Consider the Markov chain $n \mapsto \psi_n^\vep =: \psi_n$, we will use a standard trick to decompose this Markov chain into a random walk with independent steps. Consider the process:

Let $P(\psi_n , \psi_{n+1})$ denote the transition probability of this Markov chain. By Proposition \ref{prop:Doeblin eps} we have that, there exists a $\delta>0$ such that
\begin{align}\label{Doeblin}
  P(\psi_n, \psi_{n+1}) > \delta \pi(\psi_{n+1})
\end{align}
where $\pi$ is the distribution of $\Psi$. Hence, we can express
\begin{align}\label{Doeblin 2}
  P(\psi_n,\psi_{n+1}) = \delta \pi(\psi_{n+1}) +(1-\delta)Q(\psi_n,\psi_{n+1})
\end{align}
where $Q$ is some other probability transition function. 

The advantage of \eqref{Doeblin 2} is that we can reconstruct the Markov chain $(\psi_n)_{n \ge 1}$ as follows: first, define a Bernoulli random variable:
\begin{align*}
  b_n \sim {\tt BERN}(\delta), \qquad \probab{b_n = 1} = \delta, \qquad \probab{b_n =0} = 1-\delta, \qquad (n \ge 2)
\end{align*}
and $b_1=1$. If $b_n =1$, then $\psi_n$ is distributed according to $\pi$, if $b_n=0$ then $\psi_n$ is distributed according to $Q(\psi_{n-1},\psi_n)$.

\subsubsection{Breaking $(\psi_n)_{n\ge 1}$ into packs}
\label{ss:Breaking}

To construct the legs, we break the random data into 'packs'. In words, the first and last path segments of each pack involve no recollisions (i.e $\nu =0$). Moreover, to start and end a pack, we require that $b_n =1$ (that is, we use the distribution $\pi$ rather than $Q$ to select $\psi_n$).

Formally, let $\Gamma_0=1$ and let $\Theta_0 = 0$. Now for $n \ge 1$ set:
\begin{align*}
  &\Gamma_n := \min \left\{ j >  \Gamma_{n-1} : b_{j-1}=b_j =1, \nu_{j-1}=\nu_{j}= 0 \right \},
  &&\qquad \gamma_n := \Gamma_n -\Gamma_{n-1},\\
  &\Theta_{n} := \tau_{\Gamma_n},
  &&\qquad\theta_n := \Theta_n - \Theta_{n-1},
\end{align*}
the condition that the $(j-1)^{st}$ and $j^{th}$ steps involve no self-recollisions is necessary since, in Proposition \ref{prop:Doeblin eps} the random variable $\Psi$ assumes no recollisions. Furthermore for $n \ge 1$ let:
\begin{align*}
  &\xi_{n,j} = \xi_{\Gamma_{n-1}+j-1},
  &&  1\le j \le \gamma_{n},\\
  &y_{n,j} = y_{\Gamma_{n-1}  +j-1},
  &&  1 \le j \le \gamma_{n}.
\end{align*}  
%% To help the reader keep track of the labelling of path segments we arrange an example of them in the following table. Each column represents a path segment. The double lines represent the start of a 'pack' (see below), therefore $\Theta_n$ is the sum of $\xi_i$ prior to the column containing $\Gamma_n$. In this example $\gamma_1=6$.

%% \begin{center}
%%  \begin{tabular}{||c | c | c | c | c | c || c | c | c | c|} 
%%  \hline
%%  $\xi_1$ & $\xi_2$ & $\xi_3$ & $\xi_4$ & $\xi_5$ & $\xi_6$ & $\xi_7$ & $\xi_8$ & $\xi_9$ & $\xi_{10}$ \\ 
%%   \hline
%%  $\nu_1=0$ & $\nu_2=0$ & $\nu_3=1$ & $\nu_4 =3$ & $\nu_5=2$ & $\nu_6=0$ & $\nu_7=0$ & $\nu_8=0$ & $\nu_9=0$ & $\nu_{10}=0$\\ 
%%  \hline
%%  $b_1=1$ & $b_2=0$ & $b_3=0$ & $b_4=1$ & $b_5 =0$ & $b_6=1$ & $b_7=1$  &$b_8=0$ & $b_9=1$ & $b_{10}=1$ \\
%%  \hline
%%  $\psi_1$ & $\psi_2$ & $\psi_3$ & $\psi_4$ & $\psi_5$ & $\psi_6$ & $\psi_7$  &$ \psi_8$ & $\psi_9$ & $\psi_{10}$ \\ 
%%  \hline
%%  $\Gamma_0=1$ & $\cdot$ & $\cdot$ & $\cdot$ & $\cdot$& $ \cdot$ & $\Gamma_1=7$ & $\cdot$ & $\cdot$  &$ \cdot$   \\ 
%%  \hline
%%  $\xi_{1,1}$ & $\xi_{1,2}$ & $\xi_{1,3}$ & $\xi_{1,4}$ & $\xi_{1,5}$ & $\xi_{1,6}$ & $\xi_{2,1}$ & $\xi_{2,2}$  &$ \xi_{2,3}$ & $\xi_{2,4}$ \\
%%  \hline
%%  $y_{1,1}$ & $y_{1,2}$ & $y_{1,3}$ & $y_{1,4}$ & $y_{1,5}$ & $y_{1,6}$ & $y_{2,1}$ & $y_{2,2}$  &$ y_{2,3}$ & $y_{2,4}$   \\ [1ex]
%%  \hline
%% \end{tabular}
%% \end{center}
We define a \emph{pack} to be:
\begin{align*}
  &\varpi_n : = (\gamma_n , (\xi_{n,j}, y_{n,j}): 1 \le j \le \gamma_n ) \qquad n \ge 1.
\end{align*}
It is important to note that the individual packs are \emph{independent, identically distributed}. On each pack we define the associated discrete and continuous $Y=Y^\vep$ process: for $n \ge 1$:
\begin{align*}
  &Y_{n,0} =Y(\Theta_{n-1}),   && Y_{n,j} = Y_{n,0} + \sum_{i = 1}^j y_{n,j},      &&j \le \gamma_n, \\
  &Y_n(0)=Y(\Theta_{n-1}),    && Y_n(t) := Y(\Theta_{n-1}+t),              && 0\le t \le \theta_n. 
\end{align*}
Finally we define the discrete \emph{end-point process}:
\begin{align*}
  \Xi_0 = 0,  \qquad   \qquad \Xi_n := \Xi_{n-1} + \sum_{i=1}^{\gamma_n}y_{n,i}, \qquad  \qquad n \ge 1.
\end{align*}
That is the process $\{n \mapsto \Xi_n\}$ denotes the discrete process made of the end-point of the successive legs.

\subsubsection{Green's function estimates}

Define the following Green's functions (occupation measures): given a set $A\subset \R^2$,
\begin{align*}
  & G(A) := \expect{ \abs{ \{ 1 \le k \le N( T ) : Y_k \in A \}}}\\
  & H(A) := \expect{ \abs{ \{0 < t \le T : Y(t) \in A \}}}\\
  & g(A) := \expect{ \abs{ \{ 1 \le k \le \gamma_1 : Y_k \in A \}}}\\
  & h(A) := \expect{ \abs{ \{0 < t \le \theta_1 : Y(t) \in A \}}}\\
  & R(A) := \expect{ \abs{ \{ 1 \le k \le n(T) : \Xi_k \in A \}}},\\
\end{align*}
where $N(T) = \max\{ n\ge 0 : \tau_n < T\}$ (the total number of collisions in time $T$) and $n(T) := \max\{ n \ge 0 :  \Theta_{\Gamma_n}<T\}$ (the total number of legs).

These occupation measures then satisfy the following bounds

%%%%%%%%%%%%%%%%%%%%%%%%%%%%%%%%%%%%%%%%%%%%%%%%%%%%%%%%%%%%%%%%%%%%%%%%%%%%%%%%
% -----------------------------Lemma: greensbnds--------------------------------
%%%%%%%%%%%%%%%%%%%%%%%%%%%%%%%%%%%%%%%%%%%%%%%%%%%%%%%%%%%%%%%%%%%%%%%%%%%%%%%%

\begin{lemma}\label{lem:greensbnds}
  For any measurable set $A \subset \R^2$ the following bounds hold:
  \begin{gather}\label{green_bounds}
    \begin{gathered}
      g(A), h(A) \le L(A), \qquad R(A), G(A), H(A) \le K(A) + L(A)
    \end{gathered}
  \end{gather}
  where

  \begin{gather*}
    K(dx) := C\min \{\vep^{-1},\abs{x}^{-1}\}dx + Cdx     , \qquad L(dx) := C\abs{x}^{-1}e^{-c\abs{x}}dx
  \end{gather*}
  for some constants $0<c,C<\infty$.
\end{lemma}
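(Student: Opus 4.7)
The plan is to split the bounds into two layers: first establish the single-leg estimates $g,h\le L$, and then bootstrap to the full Green's functions via the i.i.d.\ structure of the legs, yielding the $K+L$ bounds for $R$, $G$, $H$.

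For the single-leg bounds, the crucial point is the D\"oblin decomposition of Section \ref{ss:Doeblin Condition}: each leg begins from a draw of the stationary measure $\pi$ given by $\Psi$, whose angular coordinate is uniform on $[0,2\pi]$. As a result, the entire leg trajectory has a distribution invariant under rotations of $\R^2$ about the leg's starting point. Writing this rotation-invariant occupation measure in Cartesian coordinates automatically produces the $|x|^{-1}$ factor (the polar Jacobian). The exponential decay $e^{-c|x|}$ follows from combining three observations: (i) each individual step has magnitude at most $2+2\vep$; (ii) the leg length $\gamma_1$ has exponential tails, since $b_n=1$ with probability $\delta$ and $\nu_n=0$ with probability bounded below uniformly in $\vep$, so $\probab{\gamma_1\ge n}\le Ce^{-cn}$; (iii) reaching displacement $|x|$ requires at least order $|x|$ steps within the leg. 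The continuous-time $h\le L$ is analogous, using that each leg is a union of unit-radius arcs of bounded total length.

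For the bootstrap, I would view the endpoint process $\Xi_n$ as an $\R^2$-valued random walk with i.i.d.\ mean-zero increments (symmetry again forced by the uniform initial angle) whose common density is majorized by $L$. The exponential tail of $L$ yields finite variance, so the local central limit theorem gives $p_n(x)\le C/n$ uniformly in $x$ and in $\vep$. Summing over $n\le n(T)\le CT\le C(\vep\abs{\log\vep})^{-1}$, the bulk contribution is $\sum_{n\ge n_0}p_n(x)\le C\log T\le C\abs{\log\vep}\le C\vep^{-1}$, while the first few terms $n<n_0$ contribute the singular $L$ part via iterated convolutions of $L$. This yields $R\le K+L$, the $\min\{\vep^{-1},|x|^{-1}\}$ shape capturing the crossover between the singular near-origin regime (dominated by a single leg) and the bounded bulk regime (from the diffusive average). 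Finally, $G$ and $H$ follow from the convolution representations $G(A)=\expect{\sum_n g(A-\Xi_{n-1})}=(R*g)(A)$ and similarly for $H$, combined with the convolution estimate $L*(K+L)\le C(K+L)$, which is direct from the explicit forms.

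The main obstacle will be the careful bookkeeping of the convolutions near the origin: the $|x|^{-1}$ singularity of $L$ is not preserved under self-convolution in two dimensions (instead $L*L$ produces a $\log|x|^{-1}$ singularity, and $L^{*3}$ becomes bounded), so one must track how successive convolutions smooth the singularity while keeping all constants uniform in $\vep$. Closely related is the verification that the LCLT holds with rate $C/n$ uniformly in $\vep$: this requires control of the characteristic function of the increment at all frequencies, which is delicate because the increment density carries an $|x|^{-1}$ singularity at the origin.
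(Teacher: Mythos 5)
Your overall architecture (single-leg bound first, then the endpoint walk $\Xi$, then $G,H$ by convolving with $R$) matches the paper's, and your final convolution step for $G$ and $H$ is essentially the paper's. But two of your key steps have real gaps. First, rotation invariance of a leg about its starting point does not ``automatically'' produce the $\abs{x}^{-1}$ bound: it only says the occupation measure has the polar form $\frac{1}{2\pi r}\mu(dr)\,d\theta$, and the entire content of $g\le L$ is that the radial marginal $\mu$ has a bounded, exponentially decaying density -- rotation invariance is equally consistent with $\mu$ having atoms (an occupation measure concentrated on a circle is rotation invariant and not even absolutely continuous). Proving that radial regularity, uniformly in $\vep$ and over all steps of a leg (including the segments with $\nu>0$), is precisely the work you have waved away. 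The paper never passes to polar coordinates: it uses that $(\alpha_1,\xi_1)$ have bounded joint densities, so the law of a fresh step is dominated by a fixed reference law with an $\abs{x}^{-1}$-type density on the radius-$2$ disk, and then decouples this first step from the exponentially tight remainder of the leg through the convolution \eqref{convolution}; your exponential-tail input (bounded steps, geometric $b_n$) is the same, but the singular part of the bound needs an argument of this kind, not the Jacobian alone.

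Second, your local-CLT bookkeeping does not deliver the stated bound on $R$ (and hence on $G,H$). Summing $p_n(x)\le C/n$ over $n\le n(T)\le CT$ gives a bulk occupation density of order $\log T\sim\abs{\log\vep}$ \emph{uniformly in $x$}; this is not dominated by $K+L$ once $\abs{x}\ge\abs{\log\vep}^{-1}$, and the chain ``$\le C\abs{\log\vep}\le C\vep^{-1}$'' does not rescue it, because the $\vep^{-1}$ branch of $\min\{\vep^{-1},\abs{x}^{-1}\}$ is only available for $\abs{x}\le\vep$. So what you have actually proved is a weaker statement, with the constant term $C\,dx$ replaced by $C\abs{\log\vep}\,dx$, and you cannot simply announce $R\le K+L$. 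There are two ways out: either import the sharper random-walk Green's function input the paper relies on ({\cite[Lemma 6]{lutsko-toth_19}} for the far field, together with the cap $\min\{\vep^{-1},\abs{x}^{-1}\}$ obtained, as in \eqref{R conv}, from $R(\R^2)=\expect{n(T)}\le C\vep^{-1}$ and the bounded density of the first step of a leg), or restate the lemma with the extra $\abs{\log\vep}$ in the bulk term and recheck the Riemann-sum computation preceding Proposition \ref{prop:inter} -- it still closes, with that contribution becoming $C\vep\abs{\log\vep}$ instead of $C\vep$ -- but as written the claimed inequality does not follow from your estimate. Your flagged concerns (uniformity in $\vep$ of the LCLT, smoothing of the $\abs{x}^{-1}$ singularity under convolution) are legitimate but secondary to these two points.
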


\begin{proof}

  First, we consider
  \begin{align}
    g_1(A):= \probab{y_1 \in A}.
  \end{align}
  Since $\nu_1 = 0$ we know that
  \begin{align*}
    y_{1}= \wt{y}_1 = U_0 e^{i\alpha_1} 2 \sin \frac{\xi_1}{2} e^{i \xi_1/2},
  \end{align*}
   in particular, since the density of distribution of $\alpha_1$ and $\xi_1$ are bounded, there exists a constant $C<\infty$ such that
  \begin{align}\label{g1 bound}
    g_1(A) \le C\probab{ v \in A } = : C\wt{g}_1(A)
  \end{align}
  where $v$ is a uniformly distributed vector on the radius $2$ disk. 

  To achieve the desired bound we introduce an auxiliary discrete process, $(\wt{Y}_n)_{1\le n \le \gamma_1}$. Let $\wt{Y}_0=0$ and let $\wt{Y}_1 = v$. Then, let  $\wt{Y}_2-\wt{Y}_1$ be distributed like $y_1$ (independent of $\wt{Y}_1$), and all subsequent steps follow the construction of the markovized Lorentz process (see Subsection \ref{ss: The markovized version of the MLP}). In words, $n \mapsto \wt{Y}_n$ takes one step which is simply a uniformly distributed vector on the radius $2$ disk. Then, from there we start a discrete markovized Lorentz process. With that, the following upper bound follows from \eqref{g1 bound} and the bounded distributions:
  \begin{align}
    \begin{aligned}\label{convolution}
    g(A) &= \expect{ \condexpect{\abs{\{1 \le k \le \gamma_1 : Y_k - Y_1 \in A-x \}}}{Y_1=x}} \\
    &\le C \expect{  \condexpect{\abs{\{1 \le k \le \gamma_1 : \wt{Y}_k - \wt{Y}_1 \in A-x \}}}{\wt{Y}_1 =x }} \\
    &\le C \int_{\R^2} \wt{g}_2(A-x) \wt{g}_1(dx).
    \end{aligned}
  \end{align}
  where $\wt{g}_2(A) : = \expect{\abs{\{1 \le k \le \gamma_1 : \wt{Y}_k-\wt{Y}_1  \in A \}}}$, that is, we use the independence of the first two steps of $\wt{Y}_k$ to decompose the above expectation into a convolution.

 Because of the geometric distribution of $b_n$ we conclude that $\gamma_1$ is exponentially tight. That is,
  \begin{align*}
    \wt{g}_2(\{x: \abs{x} > s \}) \le Ce^{-cs}, \qquad \qquad
    \wt{g}_2(\R^2) = \expect{\gamma_1} < \infty.
  \end{align*}
  Inserting these into \eqref{convolution}, and using the fact that $\wt{g}_1(dx) \le \abs{x}^{-1}dx$, the bounds in \eqref{green_bounds} on $g(A)$ follow. The bound on $h(A)$ follows more or less immediately from the bound on $g(A)$ and the fact that the $\xi_n$ are exponentially distributed.

  Turning now to $R(dx)$, recall that $\Xi$ is a random walk, whose steps are i.i.d and exponentially bounded. Standard bounds on such walks are well-known (see e.g {\cite[Lemma 6]{lutsko-toth_19}}) yielding the bound $R(dx) \le \abs{x}^{-(d-2)} dx$ as $x \to \infty$. To achieve a bound when $x\to 0$ we use a similar strategy as for $g(dx)$. Define a discrete process $\{\wt{\Xi}\}_{n \in \N}$ similar to $\{\Xi_n \}$ as follows: $\wt \Xi_1 \sim \Xi_1$, then $\wt\Xi_2 - \wt\Xi_1 \sim \Xi_1$ independent of $\wt\Xi_1$. All the rest of the $\wt\Xi_n$ for $n \ge 3$ are distributed according to the rules defining $\Xi_n$. Now, once again we can use the fact that the density of distribution of the first step of $\Xi_2 -\Xi_1$ (i.e $y_{2,1}$) is bounded to conclude: 
  \begin{align} \label{R conv}
    \begin{aligned}
      R(A) &\le C\probab{\wt\Xi_1 \in A} + C\int_{\R^2} \wt{R}(A-x) \probab{\wt\Xi_1 \in dx},\\
           &= C\probab{\Xi_1 \in A} + C\int_{\R^2} R(A-x) \probab{\Xi_1 \in dx},
    \end{aligned}
  \end{align}
  where $\wt{R}(A) : = \expect{ \abs{ \{ 2 \le k < n(T) : \wt{\Xi}_k - \wt{\Xi}_1 \in A \}}}$. Now note that by the definition of $n(T)$ we have $R(\R^2) = \expect{n(T)} = C \vep^{-1}$. With that the bound, \eqref{green_bounds} for $R$ follows from $\probab{\Xi_1 \in A} < g(A)$ and \eqref{R conv}

  The bounds for $G$ and $H$ follow a similar strategy. We use the bounded density of distribution of the first step to write
  \begin{align}
    \begin{aligned}
    G(A) &\le C g(A) + C\int_{\R^2} g(A-x) R(dx),\\
    H(A) &\le C h(A) + C\int_{\R^2}h(A-x) R(dx),
    \end{aligned}
  \end{align}
  from which \eqref{green_bounds} follow.
\end{proof}

\subsection{Inter-leg mismatches}
\label{ss: Inter-leg mismatches}

In this section we show that there are no mismatches between path segments in \emph{different} legs. Let $t \mapsto Y(t)$ be a Markovian flight process starting at the origin with initial velocity $U_0$. Now we construct two auxiliary processes. Let $\wt{\alpha} \sim 2 \arccos({\tt UNI}[-1,1])$. Now let  $t \mapsto Y^\ast(t)$ denote a Markovian flight process moving in the clockwise (rather than anti-clockwise) direction, starting at the origin, with initial velocity $U_0 e^{\wt{\alpha}}$. In words, the process $t\mapsto Y^{\ast}(t)$ denotes the past trajectory of $t \mapsto Y(t)$. Finally we let $t \mapsto Y^{\ast\ast}$ be a Markovian flight process moving in the clockwise direction which is fully independent of $Y$ and $Y^{\ast}$. We let $n \mapsto Y^\ast_n$ and $n \mapsto Y^{\ast\ast}_n$ denote the discrete processes associated to $Y^\ast$ and $Y^{\ast\ast}$ respectively. Define the following events:
\begin{align*}
  & \wh{W}_j := \left\{ \min\{ \abs{Y(t) - Y_j} : \quad 0 < t < \Theta_{j-1}, \quad \Gamma_{j-1} < k \le \Gamma_j \} < \vep        \right\} \\
  & \wt{W}_j :=  \left\{ \min\{ \abs{Y_k - Y(t)} : \quad 0 < k < \Gamma_{j-1}, \quad \Theta_{j-1} < t \le \Theta_j \} < \vep        \right\}   \\
  & \wh{W}_T^{\ast}  := \left\{\min\{\abs{Y^{\ast}(t) - Y_k}: \quad 0 < t < T, \quad 0 <k \le \gamma       \}<\vep \right\}\\
  & \wt{W}_T^{\ast} :=   \left\{\min\{\abs{Y^{\ast}_k - Y(t)}: \quad 0 < k < \lfloor T\rfloor, \quad 0 <t \le \theta       \}<\vep \right\}.    \\    
  & \wh{W}_T^{\ast\ast}  := \left\{\min\{\abs{Y^{\ast\ast}(t) - Y}: \quad 0 < t < T, \quad 0 <k \le \gamma       \}<\vep \right\}\\
  & \wt{W}_T^{\ast\ast} :=   \left\{\min\{\abs{Y^{\ast\ast}_k - Y(t)}: \quad 0 < k < \lfloor T\rfloor, \quad 0 <t \le \theta       \}<\vep \right\}.   
\end{align*}
Note that there exists a constant $C< \infty$ independent of $\vep$ such that:
\begin{align}\label{W bounds}
  \begin{aligned}
    &\probab{\wh{W_j}} \le \probab{\wh{W}^{\ast}_T}\le C\probab{\wh{W}^{\ast\ast}_T} \le 2C\vep^{-1} \sum_{z \in \Z^2} G(B_{z\vep,2\vep})h(B_{z\vep,3\vep}) \\
    &\probab{\wt{W_j}} \le \probab{\wt{W}^\ast_T} \le C\probab{\wt{W}^{\ast\ast}_T}  \le 2C\vep^{-1} \sum_{z \in \Z^2} H(B_{z\vep,2\vep})g(B_{z\vep,3\vep}). \\
  \end{aligned}
\end{align}
As before, the second inequality follows from the bounded distribution of the first step of a Markovian flight process.

Now we insert the bounds from Lemma \ref{green_bounds} into \eqref{W bounds} and evaluate the resulting sums using Riemann sums (similar to {\cite[Section 3.4]{lutsko-toth_19}}), i.e
\begin{align*}
  \probab{\wh{W_j}} &\le 2C\vep^{-1} \sum_{z \in \Z^2} G(B_{z\vep,2\vep})h(B_{z\vep,3\vep}) \\
      &\le 2C\vep^{-1} \sum_{z \in \Z^2} (K(B_{z\vep,2\vep}) + L(B_{z\vep,2\vep})) L(B_{z\vep,3\vep})  \\
      &\le C\vep^{3} \sum_{0 \neq z \in \Z^2} (\abs{\vep z}^{-1} +\abs{\vep z}^{-1}e^{-c\abs{\vep z}} )\abs{\vep z}^{-1}e^{-c\abs{\vep z}} + C\vep  \\  
      &\le C\vep^{3} \sum_{0 \neq z\in \Z^2} \abs{\vep z}^{-2} \cdot e^{-c\abs{z}} + C\vep\\
      &\le C\vep \int_{\R^2 \setminus B_{0,\vep}} e^{-c\abs{z}}\abs{z}^{-2}dz + C\vep\\
      &\le C\vep \int_\vep^\infty e^{-c u }u^{-1} du + C\vep
      \le C\vep \abs{\log \vep}
\end{align*}
for constants $C < \infty$ changing from line to line. Thus (with the equivalent bound for $\wt{W}_j$ following the same lines):

\begin{proposition}\label{prop:inter}
  There exists a constant $C<\infty$ such that for all $j\ge 1$
  \begin{align}\label{inter}
    \probab{\wt{W}_j} \le C\vep \abs{\log \vep}, \qquad
    \probab{\wh{W}_j} \le C\vep \abs{\log \vep}.    
  \end{align}

\end{proposition}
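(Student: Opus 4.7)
The plan is to bound the probabilities $\probab{\wh W_j}$ and $\probab{\wt W_j}$ by a union-bound / first-moment argument that converts the "$\vep$-neighborhood of a past leg meets the current leg" event into an integral of a product of two Green's functions, and then to evaluate that integral using the estimates from Lemma \ref{lem:greensbnds}.

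The first step is reduction to the auxiliary processes. Rather than working with $\wh W_j$ and $\wt W_j$ directly (which involve the particular past/future split at the $j$-th leg), I would first dominate them by the events $\wh W^{\ast}_T$ and $\wt W^{\ast}_T$ formulated with the time-reversed process $Y^\ast$, and then, using the bounded density of the first step of a Markovian flight process (which was already exploited in the proof of Lemma \ref{lem:greensbnds}), absorb the coupling between $Y$ and $Y^\ast$ at cost of a multiplicative constant, passing to the fully independent $Y^{\ast\ast}$. This produces the chain of inequalities in \eqref{W bounds} and reduces everything to a problem about two independent Markovian flight processes starting at the origin.

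The second step is the first-moment / covering bound. For each $z\in\Z^2$ consider the $\vep$-ball $B_{z\vep,\vep}$. If the distance between a discrete position $Y_k$ of one process and a continuous position $Y^{\ast\ast}(t)$ of the other is $<\vep$, then both must lie in some slightly enlarged ball $B_{z\vep,2\vep}$ (resp.\ $B_{z\vep,3\vep}$) for some $z$. Markov's inequality together with the $\vep^{-1}$ bound on the density of the single-step distribution for the Markovian flight process gives the cover bound
\begin{align*}
\probab{\wh W^{\ast\ast}_T}\le C\vep^{-1}\sum_{z\in\Z^2} G(B_{z\vep,2\vep})\,h(B_{z\vep,3\vep}),
\end{align*}
and similarly with $G,h$ replaced by $H,g$ for $\wt W^{\ast\ast}_T$.

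The third step is to plug in Lemma \ref{lem:greensbnds}: $G, H\le K+L$ while $g,h\le L$, where $K(dx)\sim \min(\vep^{-1},|x|^{-1})dx$ and $L(dx)\sim |x|^{-1}e^{-c|x|}dx$. The exponential factor in $L$ controls the tail and makes it essentially a bounded measure on any annulus of radius $\geq 1$, so after summing the dominant contribution comes from small $|z|$. Converting each sum into a Riemann sum and then into an integral over $\R^2\setminus B_{0,\vep}$ gives
\begin{align*}
\vep^{-1}\sum_{z\in\Z^2}K(B_{z\vep,2\vep})L(B_{z\vep,3\vep})
\lesssim
\vep\int_{\R^2\setminus B_{0,\vep}}|x|^{-2}e^{-c|x|}\,dx
\lesssim
\vep\int_\vep^\infty u^{-1}e^{-cu}\,du \lesssim \vep|\log\vep|,
\end{align*}
which is where the logarithm enters. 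The $LL$ term is even smaller (no $|x|^{-1}$ from $K$), and the lattice error away from $|z|\ge 1$ is absorbed in the constant.

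The main obstacle, and the technical heart of the argument, is the inverse-square singularity $|x|^{-2}$ at the origin in the integrand: this is the two-dimensional recurrence making its appearance, and it is precisely what produces the $|\log\vep|$ factor and limits the time scale in Theorem \ref{thm:XY}. I would need to handle the near-origin contribution carefully — either by the $\min(\vep^{-1},|x|^{-1})$ truncation in $K$ or by noting that the $|x|\le\vep$ term in the sum is a single $O(\vep\cdot 1)$ summand — to verify that no extra divergence is hidden there. Once this integral is controlled, Proposition \ref{prop:inter} follows by taking the worst of the two bounds and observing that the constants are uniform in $j$.
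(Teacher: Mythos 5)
Your proposal follows essentially the same route as the paper: reduce $\wh W_j,\wt W_j$ to the time-reversed and then fully independent processes $Y^{\ast},Y^{\ast\ast}$ using the bounded density of the first step, bound the near-collision probability by the lattice sum $\vep^{-1}\sum_{z}G(B_{z\vep,2\vep})h(B_{z\vep,3\vep})$ (resp.\ $H\cdot g$), and evaluate it with Lemma \ref{lem:greensbnds} by Riemann-sum comparison, the $|x|^{-2}$ singularity producing the $\vep\abs{\log\vep}$ factor, with the origin term handled separately. The one small imprecision is your attribution of the prefactor $\vep^{-1}$ to the single-step density: it really comes from converting the entrance probability of the continuous trajectory into occupation measure (each entrance into $B_{z\vep,2\vep}$ forces occupation time of order $\vep$ in the enlarged ball $B_{z\vep,3\vep}$, which is why the two radii differ), but this does not affect the structure or the conclusion.
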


\subsection{Intra-leg mismatches}
\label{ss: Intra-leg mismatches}

Above, we have shown that (on our time scales) the probability of a mismatch occurring between path segments in two different legs goes to $0$ as $\vep \to 0$. It remains to show that path segments within a single leg do not interfere with one another. For this we require the following simple geometric lemma.

\subsubsection{A geometric lemma}
\label{ss:A Geometric Lemma}

In this section, we estimate the probability that the Markovian flight process, during one path segment (i.e between two successive fresh collisions) hits a pre-placed spherical region of radius $\vep$. To that end, fix a triple $o \in \R$ (taken for the moment to be the origin), $u \in S_1$, and $\alpha \in [0,2\pi]$. Let $\{t \mapsto \cY(t)\}$ be a Markovian flight process such that $\cY(0) = o$, with initial velocity $ue^{i\alpha}$ and one single scatterer centered at $b \in \vep S^1$. Note that $b$ is uniquely determined by $u$ and $\alpha$ (i.e the scatterer is at the unique position such that collision at $o$ with velocity $u$ results in velocity $ue^{i\alpha}$). Therefore, $\cY$ performs circular motion, colliding infinitely many times with the scatterer at $b$.

\begin{lemma}\label{lem:geometric}
  Let $u \in S_1$ be uniformly distributed on the circle and $\alpha \sim 2 \arccos({\tt UNI}[-1,1])$. There exists a constant $C< \infty$ such that for $\vep>0$ small enough, any time $\xi \in (0,\infty)$, and any vector $v \in \R^2$ 
  \begin{align}\label{geo}
    \probab{ \min_{t<\xi} \abs{\cY(t) - v} <\vep} \le C\frac{\vep}{\abs{v}}\xi.
  \end{align}
  where $\{t \mapsto \cY(t)\}$ is the process described above. 

\end{lemma}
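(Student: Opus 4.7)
The plan is to exploit the rotational symmetry in $u$ to reduce the probability to a purely geometric question about a curve and a circle. First, conditioning on $\alpha$, observe that $u\mapsto\cY(\cdot;u,\alpha)$ is equivariant under rotation: the scatterer $b$ is determined by $u$ and $\alpha$, so rotating $u$ rotates $b$ and the entire resulting trajectory. Writing $\gamma_\alpha:=\{\cY(t;1,\alpha):0\le t\le\xi\}$ and denoting by $(\gamma_\alpha)_\vep$ its open $\vep$-neighborhood, the event $\{\min_{t<\xi}|\cY(t)-v|<\vep\}$ is equivalent to $\{\bar u\, v\in(\gamma_\alpha)_\vep\}$. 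Since $\bar u v$ is uniformly distributed on $\partial B_{0,|v|}$, we obtain
\begin{align*}
\condprobab{\min_{t<\xi}|\cY(t)-v|<\vep}{\alpha}
=\frac{\mathrm{Leb}\bigl(\partial B_{0,|v|}\cap(\gamma_\alpha)_\vep\bigr)}{2\pi|v|}.
\end{align*}

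Next I would analyze $\gamma_\alpha$ as a union of Larmor arcs. Between successive collisions with the single scatterer the particle traces a circular arc of Larmor radius $1$, and an elementary geometric argument shows each such arc has time-duration $2\pi-O(\vep)$: the scatterer of radius $\vep$ obstructs only an $O(\vep)$-piece of each Larmor circle, so the outside arc between two consecutive collision points is almost a full loop. Consequently $\gamma_\alpha$ consists of at most $N=O(1+\xi)$ Larmor arcs. For each arc lying on $\partial B_{c_j,1}$ with $|c_j|\in[1-\vep,1+\vep]$, the two circles $\partial B_{0,|v|}$ and $\partial B_{c_j,1}$ meet in at most two points with intersection angle $\theta_j$, and the standard transversality estimate yields $\mathrm{Leb}\bigl(\partial B_{0,|v|}\cap(\partial B_{c_j,1})_\vep\bigr)\le C\vep/\sin\theta_j$. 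In the transversal regime this is $O(\vep)$; summing over the $N$ arcs and dividing by $2\pi|v|$ yields the stated bound $O(\vep\xi/|v|)$.

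The main obstacle is the near-tangent regime $\sin\theta_j\ll 1$, which occurs when $|v|\approx 2$: for $|c_j|\approx 1$ the Larmor circle is nearly internally tangent to $\partial B_{0,2}$ at the point $2c_j$, and the per-arc bound degrades to $O(\vep/\sqrt{4-|v|^2})$, as large as $O(\sqrt\vep)$ when $|v|$ is within $O(\vep)$ of $2$. To rescue the bound in this regime one combines two facts: first, the trajectory is only an arc of the Larmor circle, so if $\xi$ is too small to reach the far point $2c_j$ the probability is trivially zero; second, for longer times the randomness in $\alpha$ is propagated into the subsequent Larmor centres $c_j$ ($j\ge 2$), which spread across an $O(\vep)$-band around $\partial B_{b,1}$ and average out the near-tangent singularity. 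This tangent-case analysis is where the main technical subtlety of the lemma lies.
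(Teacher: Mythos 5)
Your overall strategy coincides with the paper's: a per--Larmor-arc hitting estimate, summed over the $O(1+\xi)$ arcs traced before time $\xi$ (the paper phrases this as the one-loop bound \eqref{geo 2} plus an induction/union bound over recollisions; your rotational-equivariance reduction to the arc-length of $\partial B_{0,\abs{v}}$ inside the $\vep$-neighbourhood of a fixed circle is a more explicit derivation of the same per-arc bound, and your transversality computation, with intersection angle satisfying $\sin\theta=\sqrt{1-\abs{v}^2/4}+O(\vep)$, is correct). Note also that both your argument and the paper's really produce $C\vep(1+\xi)/\abs{v}$ rather than $C\vep\xi/\abs{v}$; this is harmless, since the lemma is applied later through the factor $(\nu_j+1)$.

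The genuine issue is the near-tangent case, which you correctly flag but do not close, and your proposed rescue cannot work. Conditionally on $\alpha$, the first Larmor circle has its centre at distance exactly $1$ from the origin, so it is internally tangent to $\partial B_{0,2}$ at its far point; hence for $\abs{v}$ within $O(\vep)$ of $2$ and any $\xi\gtrsim\pi$ the \emph{first arc alone} is hit with probability of order $\sqrt{\vep}$, uniformly in $\alpha$. The spreading of the later centres $c_j$, $j\ge2$, is therefore irrelevant (a union contains its first term), and ``$\xi$ too small'' only excludes $\xi\lesssim\pi$. In other words, the bound \eqref{geo} as literally stated --- and the paper's ``evident'' claim \eqref{geo 2} --- fails for $\abs{v}$ near $2$; what is true, and what your transversality estimate actually yields, is the per-arc bound of order $\min\bigl\{\vep/(\abs{v}\sqrt{4-\abs{v}^2}),\,\sqrt{\vep}\bigr\}$, reducing to $C\vep/\abs{v}$ only away from tangency. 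The correct repair is not to rescue \eqref{geo} by averaging but to prove this weaker per-arc bound and carry it into Lemma \ref{lem:mismatch}: there $\abs{v}$ is essentially a chord $2\sin(\zeta/2)$ with $\zeta$ of bounded density, and $\int_0^{2\pi}\min\{\vep/\abs{\sin\zeta},\sqrt{\vep},1\}\,d\zeta=O(\vep\abs{\log\vep})$, so \eqref{direct}, \eqref{kj bound} and everything downstream are unaffected. So your instinct about where the difficulty lies is right (the paper hides it inside ``it is evident''), but as a proof of the stated inequality your proposal has a gap, and the tangent-case fix you sketch is not viable.
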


\begin{proof}
  First we consider the trajectory before the first self-collision. It is evident that
  \begin{align}\label{geo 2}
    \probab{ \min_{t< t_0} \abs{\cY(t) - v} <\vep } \le C\frac{\vep}{\abs{v}}.
  \end{align}
  where $t_0$ is the time of the first recollision (i.e $\min (t >0: \nu(t) =1)$). 

  Now we proceed inductively. Let $\zeta(\xi)$ be defined as in \eqref{exp=geom+trexp-eps}
, then:
  \begin{align*}
    \probab{\min_{t\in [\xi - \zeta(\xi), \xi]} \abs{\cY(t) - v } < \vep     } = \probab{\min_{t< \zeta(\xi)} \abs{\wt{\cY}(t) - v } < \vep }
  \end{align*}
  where $\wt{\cY}$ is a process which begins at position $\wh{y}$ (defined as in \eqref{limit comps}) with velocity: $\wt{u} =u e^{i (\nu(\xi)+1)\alpha - i \nu(\xi)\beta}$ where $\beta = \beta(u,\alpha)$ is defined in \eqref{beta-gamma-eps}.

  Note that $\abs{\wh{y}} < \vep$, and $\abs{\beta} < 2\vep$. Therefore, using the uniformity of \eqref{geo 2}, we conclude
  \begin{align*}
    \probab{\min_{t< \zeta(\xi)} \abs{\wt{\cY}(t) - v } < \vep } \le C \frac{\vep}{\abs{v}},
  \end{align*}
  for $C<\infty$ large enough. Now we can use the union bound and the inductive hypothesis to conclude \eqref{geo}.

\end{proof}

\subsubsection{Intra-leg recollisions}
\label{ss:Intra}

The next lemma controls the probability of a mismatch in the first leg recall the indicators in \eqref{mismatches}.

\begin{lemma} \label{lem:mismatch}
  There exists a constant $C<\infty$ such that, for any label $j \in \{1,\dots, \gamma\}$ the following bound holds:
  \begin{align} \label{prob mismatch}
    \expect{\eta_j} \le C\gamma \vep\abs{\log \vep} .
  \end{align}
\end{lemma}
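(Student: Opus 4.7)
The plan is to decompose $\eta_j \le \wh{\eta}_j + \wt{\eta}_j$ and bound each of the two indicators by $O(\vep\abs{\log\vep})$; the factor $\gamma$ in the stated bound simply absorbs a loose union-bound over the (at most $\gamma$) past path segments and collision points within the first leg.

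For the past-scatterer recollision $\wt{\eta}_j$, I will apply a union bound over the past collision indices $i \in \{0,\dots,j-2\}$ and, for each, apply Lemma~\ref{lem:geometric}. Specifically, conditional on the $\sigma$-algebra $\cF_{j-1}$ generated by the data up to time $\tau_{j-1}$ (which determines the past points $\{Y_i\}_{i<j-1}$ and the velocity $U_{j-1}$ entering the $j$-th arc) and on the independent flight time $\xi_j$, the $j$-th arc is exactly the circular-motion-with-single-scatterer process of Lemma~\ref{lem:geometric}, with $o=Y_{j-1}$, $u=U_{j-1}$, and $v = Y_i - Y_{j-1}$. Although the lemma is stated with $u$ uniform, the bound $C\vep\xi/\abs{v}$ is rotation-invariant in $u$ and so applies with any fixed initial direction. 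This yields $\condprobab{\exists t\in[\tau_{j-1},\tau_j):\abs{Y(t)-Y_i}<\vep}{\cF_{j-1},\xi_j} \le C\vep\xi_j/\abs{Y_{j-1}-Y_i}$ whenever $\abs{Y_{j-1}-Y_i}>\vep$; the exceptional pairs at distance $\le\vep$ are exponentially rare and handled by a separate small-probability argument.

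Taking expectations, using the independence of $\xi_j$ from $\cF_{j-1}$, and summing over $i$ reduces the problem to estimating $\expect{\sum_{i<j-1}\abs{Y_{j-1}-Y_i}^{-1}\one\{\abs{Y_{j-1}-Y_i}>\vep\}}$. By the Markov structure of the $Y$-process within the first leg (established via the D\"oblin condition of Section~\ref{ss:Doeblin Condition}), the occupation measure of the backward displacements $\{Y_{j-1}-Y_i\}_{i<j-1}$ is controlled by the single-leg Green's function $g$ from Lemma~\ref{lem:greensbnds}, which satisfies $g(dz) \le L(dz) = C\abs{z}^{-1}e^{-c\abs{z}}\,dz$. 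Integrating with cutoff at $\vep$ yields $C\int_\vep^\infty r^{-1}e^{-cr}\,dr \le C\abs{\log\vep}$, so $\expect{\wt{\eta}_j} \le C\vep\abs{\log\vep}$. For the shadowed-scattering event $\wh{\eta}_j$, I use a dual union bound over past arcs $[\tau_{k-1},\tau_k)$ for $k<j$: conditional on all data except $(\alpha_k,\xi_k)$, each past arc is a circular-motion-with-one-scatterer process, and a time-reversed geometric argument parallel to Lemma~\ref{lem:geometric} bounds the probability that this arc passes within $\vep$ of the fixed point $Y_j$ by $C\vep\xi_k/\abs{Y_j-Y_{k-1}}$. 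The same Green's function summation then gives $\expect{\wh{\eta}_j} \le C\vep\abs{\log\vep}$.

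The main obstacle is twofold: (i) justifying the Green's function bound for the \emph{time-reversed} walk within a single leg, which relies on the stationary measure and D\"oblin machinery of Section~\ref{ss:Doeblin Condition}; and (ii) formulating the dual version of Lemma~\ref{lem:geometric} needed for $\wh{\eta}_j$, where the past arc is random and the future point $Y_j$ is fixed. Both reduce to routine adaptations once the structural results of the preceding subsections are in hand. The factor $\gamma$ in the stated bound is a crude worst-case dependence on the leg length; the finer estimate $O(\vep\abs{\log\vep})$ uniform in $j \le \gamma$ drops out directly from the Green's function summation.
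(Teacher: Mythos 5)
Your skeleton (union bound over the past scatterers of the leg, Lemma \ref{lem:geometric} for each pair, then integrate out the singularity $\abs{Y_{j-1}-Y_i}^{-1}$) is the right shape, but the step you lean on for the integration is not actually available. Lemma \ref{lem:greensbnds} bounds the occupation measure $g$ of the points $Y_k$ of a leg \emph{as seen from the leg's starting point}, and its proof works precisely because the first step out of that point has a radially bounded density which is then convolved against the rest of the leg. What you need is an occupation-measure bound for the \emph{backward} displacements $\{Y_{j-1}-Y_i\}_{i<j-1}$, anchored at an intermediate collision point; this is a different object, and "time reversal plus D\"oblin, routine adaptation" is not a citation: the chain inside a leg is not stationary (it is launched from the minorizing measure and stopped by the leg-ending condition $b_{j-1}=b_j=1$, $\nu_{j-1}=\nu_j=0$), so its reversal is not literally another Markovized flight, and the conditioning implicit in "$j\in\{1,\dots,\gamma\}$" must be handled. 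The paper avoids this entirely: for $k<j-2$ it conditions on $Y_{j-2}$, $Y_k$, $\xi_j$ and integrates the bound of Lemma \ref{lem:geometric} against the bounded density of the \emph{single} intermediate step $y_{j-1}$, obtaining $\condexpect{\wt{\eta}_{j,k}}{\,\cdot\,}\le C\vep\abs{\log\vep}(\nu_j+1)$ uniformly in $Y_{j-2}-Y_k$; for the direct recollision $k=j-2$ it uses $\abs{Y_{j-1}-Y_{j-2}}\ge C\min(\zeta_{j-1},2\pi-\beta_{j-1}-\zeta_{j-1})$ together with the explicit law of $\zeta_{j-1}$. The per-pair bound $C\vep\abs{\log\vep}$ plus the union bound over the at most $\gamma$ past scatterers is exactly where the factor $\gamma$ in \eqref{prob mismatch} comes from; you may only discard it as "slack" if you genuinely prove the backward occupation bound you assert, and the natural way to do that is precisely the paper's one-step conditioning argument.

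Two further flaws in the details. Your justification for applying Lemma \ref{lem:geometric} with a fixed incoming direction ("the bound is rotation-invariant in $u$") is not valid: rotating $u$ rotates the trajectory but not the target $v$, so the bound for uniform $u$ only controls an average over the direction of $v$. What actually saves the application (and what the paper says) is that the scattering kernel $\alpha\sim 2\arccos({\tt UNI}[-1,1])$ has bounded density, so the averaged bound transfers to fixed $u$ at the cost of a constant. Also, the event $\abs{Y_{j-1}-Y_i}\le\vep$ is not "exponentially rare": already for $i=j-2$ it has probability of order $\vep$; this is still harmless for the final bound, but it needs the same density/occupation control, not a separate cheap estimate. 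Finally, the paper treats the shadowed events $\wh{\eta}_j$ not by a reversed analogue of Lemma \ref{lem:geometric} but by observing that time reversal exchanges recollisions and shadowings while preserving the law of the $Y$-process up to mirror symmetry, which is cleaner than the "dual lemma" you sketch.
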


\begin{proof}
  First note that, under time reversal, recollisions become shadowed events, and vice-versa. Therefore, since the distribution of the $Y$-process is invariant under time reversal (up to a mirror symmetry), we may focus only on recollisions. Next, define the following indicators:
  \begin{align}
    \wt{\eta}_{j,k} := \one\left( \{\exists t\in [\tau_{j-1},\tau_j) : \abs{Y_k^\prime - Y(t)} < \vep \}\right)
  \end{align}
  for $j = 3, \dots, \gamma$ and $k \le j -2$. That is, $\wt{\eta}_{j,k}$ is the indicator that during the $j^{th}$ path segment, the point particle recollides with the $k^{th}$ scatterer. 

  For simplicity, we begin by considering \emph{direct} recollisions, when $k = j-2$. The probability of a direct recollision can be controlled rather easily using Lemma \ref{lem:geometric}. Namely, if we condition on $\xi_j, Y_{j-1}, Y_{j-2}$, then Lemma \ref{lem:geometric} implies:
  \begin{align*}
    \condexpect{\wt{\eta}_{j,j-2}}{\xi_j, Y_{j-1}, Y_{j-2}} \le \min\left(\frac{C\vep}{\abs{Y_{j-1} - Y_{j-2}}} (\nu_j+1), 1\right)
  \end{align*}
  Note that Lemma \ref{lem:geometric} was proved for uniformly distributed velocities, however since the collision kernel has bounded density it can be applied here as well, and the only change will be a uniform constant multiple. 
  
  A simple geometric observation is that there exists a constant $C>0 $ such that $\abs{Y_{j-1} - Y_{j-2}} \ge C \min(\zeta_{j-1}, 2\pi-\beta_{j-1} - \zeta_{j-1})$. Thus:
  \begin{align*}
    \condexpect{\wt{\eta}_{j,j-2}}{\xi_j, \xi_{j-1}} \le \min \left(\frac{C\vep}{\min(\zeta_{j-1}, 2\pi-\beta_{j-1} - \zeta_{j-1})} (\nu_{j}+1), 1\right)
  \end{align*}
  Now we can use the tower rule and the distributions of $\zeta_{j-1}$ and $\nu_j$ to bound
  \begin{align} \label{direct}
    \begin{aligned}
      \expect{\wt{\eta}_{j,j-2}} &\le \expect{\min \left(\frac{C\vep}{\min(\zeta_j, 2\pi-\beta_j - \zeta_j)} \nu_j, 1\right)} \\
    &\le C \sum_{n=1}^\infty \frac{(n+1)}{e^{2\pi n}}  \int_0^{2\pi} \min \left(\frac{\vep}{x}, 1 \right) d x\\
    &\le C \vep \abs{\log \vep}.
    \end{aligned}
  \end{align}

  Now we turn to $\wt{\eta}_{j,k}$ for $k < j-2$. Again, we begin by conditioning on $Y_{j-1}, Y_k, \xi_j$ and using Lemma \ref{lem:geometric} to bound
  \begin{align*}
    \condexpect{\wt{\eta}_{k,j}}{Y_{j-1},Y_k, \xi_{j}} \le  \min\left(\frac{C\vep}{\abs{Y_{j-1} - Y_{k}}} (\nu_j+1), 1\right).
  \end{align*}
  In the notation of \eqref{limit comps-eps} we can write
  \begin{align*}
    \condexpect{\wt{\eta}_{k,j}}{Y_{j-2},Y_k, \xi_{j}} \le  \int_0^\infty \int_{0}^{2\pi} \min\left(\frac{C\vep}{\abs{Y_{j-2} + \vep \wh{y}_{j-1} + \wt{y}_{j-1} - Y_{k}}} (\nu_j+1), 1\right)e^{-\xi_{j-1}} \mu_{\alpha}(d\alpha_{j-1})   d  \xi_{j-1}. 
  \end{align*}
  Note that both the probability densities for $\alpha_{j-1}$ and $\zeta_{j-1}$ are bounded from above. Therefore we can upper bound the above conditional expectation by
  \begin{align*}
    \condexpect{\wt{\eta}_{k,j}}{Y_{j-2},Y_k, \xi_{j}} &\le  \max_{\wh{y} \in B_1(0)} \left(\int_0^{2} \int_{0}^{2\pi} \min\left(\frac{C\vep}{\abs{Y_{j-2} - Y_k + \vep \wh{y} + v e^{i\alpha} }} (\nu_j+1), 1\right)  d\alpha d v  \right),\\
    &\le  \int_0^{2} \min\left(\frac{C\vep}{v} (\nu_j+1), 1\right)  d v, \\
    &\le C \vep \abs{\log \vep }(\nu_j+1),
  \end{align*}
  where $B_1(0)$ is the unit ball centered at $0$. We can uniformly bound this by 
  \begin{align*}
    \condexpect{\wt{\eta}_{k,j}}{Y_{j-2},Y_k, \xi_{j}} \le C \vep \abs{\log \vep }(\nu_j+1).
  \end{align*}
  From here, the tower law then implies
  \begin{align}\label{kj bound}
    \expect{\wt{\eta}_{k,j}} \le C \vep \abs{\log \vep}.
  \end{align}

  Putting \eqref{kj bound} and \eqref{direct} together, applying the union bound and the tightness of $\gamma$, we conclude \eqref{prob mismatch}.

\end{proof}

\subsection{Proof of Theorem \ref{thm:XY}}
\label{ss: Proof of Theorem thm: XY}

The rest of the proof follows {\cite[Section 7]{lutsko-toth_19}} fairly exactly. Let $\varpi_n$, $n \ge 1$ be an i.i.d sequence of packs. For each pack we denote $((Y_n(t)) : 0 \le t \le \theta_n)_{n \ge 1}$ the associated leg. Let $\{ t \mapsto Y(t)\}$ denote the full Markovian process decomposed into legs. Furthermore we define two discrete stopping times: 
\begin{align*}
  &\rho_1 := \min \{n \ge 1:  \eta_j^n =1 \mbox{ for some } j \in \{1,\dots, \gamma_n \}\}\\
  &\rho_2 := \min \{n \ge 1: \max \{\wt{W}_n, \wh{W}_n\} = 1 \}
\end{align*}
where $\eta_j^n$ is the same as the indicators in \eqref{mismatches} for the process $\{t \mapsto Y_n(t)\}$ (i.e the indicator that during the $j^{th}$ step of the $n^{th}$ leg there is a mismatch with another path segment in the same leg). Therefore, $\rho_1$ is the time of the first mismatch within one leg, and $\rho_2$ is the first time two legs intersect to give a mismatch.

With that the stopping time defining $X$ is given by $\rho = \min\{\rho_1,\rho_2\}$. And by construction, $X(t) = Y(t)$ for all $t \le \Theta_{\rho-1}$. Using Proposition \ref{prop:inter} and Lemma \ref{lem:mismatch} we have
\begin{align*}
  \begin{aligned}
    \probab{\Theta_{\rho-1} < T } &\le \probab{\rho_1 \le 2 \expect{\theta}^{-1}T} +  \probab{\rho_2 \le 2 \expect{\theta}^{-1}T} + \probab{ \sum_{j=1}^{2\expect{\theta}^{-1}< T}\theta_j < T}\\
    &\le C\vep \abs{\log{\vep}}T + Ce^{-cT},
  \end{aligned}
\end{align*}
for two constants $C<\infty$ and $c>0$. Thus
\begin{align} \label{Theta bound}
  \lim_{\vep\to 0} \probab{\Theta_{\rho-1} < T } =0,
\end{align}
for $T = o \left( \vep^{-1} \abs{\log{\vep}}^{-1}\right)$. From here Theorem \ref{thm:XY} follows immediately.

\qed

\section*{Acknowledgements}

The work of BT was supported by EPSRC (UK) Fellowship EP/P003656/1 and by NKFI (HU) K-129170.

\vskip2cm

\hbox{

\vbox{\hsize=7cm\noindent
{\sc CL's address:}
\\
Mathematics Department
\\
University of Houston
\\
Houston, 77004
\\
USA
\\
{\tt clutsko@uh.edu}
}

\vbox{\hsize=7cm\noindent
{\sc BT's address:}
\\
School of Mathematics
\\
University of Bristol
\\
Bristol, BS8 1TW
\\
United Kingdom
\\
{\tt balint.toth@bristol.ac.uk}
}

}

\end{document}